\numberwithin{equation}{section}
\newcommand{\tcb}[1]{\textcolor{blue}{#1}}
\newcommand{\R}{\mathbb{R}}
\newcommand{\cD}{\mathcal{D}}
\newcommand{\cM}{\mathcal{M}}
\newtheorem{theorem}{Theorem}[section]
\newtheorem{lemma}{Lemma}[section]
\newtheorem{remark}{Remark}[section]
\newtheorem{proposition}{Proposition}[section]
\newtheorem{assumption}{Assumption}[section]
\newtheorem{corollary}{Corollary}[section]
\numberwithin{equation}{section}
\begin{document}

\title{Energy and quadratic invariants preserving methods for Hamiltonian systems with holonomic constraints}

\author[1]{Lei Li\thanks{E-mail: leili2010@sjtu.edu.cn}}
\author[2]{Dongling Wang \thanks{E-mail: wdymath@nwu.edu.cn.}}
\affil[1]{School of Mathematical Sciences, Institute of Natural Sciences, MOE-LSC, Shanghai Jiao Tong University, Shanghai, 200240, P. R. China.}
\affil[2]{Department of Mathematics and Center for Nonlinear Studies, Northwest University, Xi'an, Shaanxi, 710127, P. R. China.}
\maketitle

\begin{abstract}
We introduce a new class of parametricization structure-preserving partitioned Runge-Kutta ($\alpha$-PRK) methods for Hamiltonian systems with holonomic constraints. When the scalar parameter $\alpha=0$, the methods are reduced to the usual symplectic PRK methods like Shake-Rattle method or PRK schemes based on Lobatto IIIA-IIIB pairs, which can preserve all the quadratic invariants and the constraints.
When $\alpha\neq 0$, the methods are also shown to preserve all the quadratic invariants and the constraints manifold exactly. At the same time, for any given consistent initial values $(p_{0}, q_0)$ and small step size $h>0$, it is proved that there exists $\alpha^*=\alpha(h, p_0, q_0)$ such that the Hamiltonian energy can also be exactly preserved at each step. 
We provide a new variational formulation for symplectic PRK schemes and use it to prove that the parametrized PRK methods can preserve the quadratic invariants for Hamiltonian systems subject to holonomic constraints.
The parametric $\alpha$-PRK methods are shown to have the same convergence rate as the usual PRK methods and perform very well in various numerical experiments.
\end{abstract}

\section{Introduction}\label{sec:Introd}
For a given differential equation, a numerical method is called a geometric numerical integrator if it can accurately preserve some of the geometric characteristics of its solution  \cite{hairer2006geometric}. The structure-preserving algorithm of differential equations has made a lot of important development, and the basic idea of maintaining the important structure of original differential equation for numerical methods is widely accepted.  For the Hamiltonian system, the symplectic integrator for preserving the symplectic geometry property of the solution has been proved to have very good orbital tracking ability for a long time and various excellent properties \cite{Feng1986Difference, Feng2003,hairer2006geometric, marsden2001discrete}. 
Many effective methods for constructing symplectic integrators, such as the methods based on variational integrators \cite{marsden2001discrete}, generating functions \cite{Feng2003}  and Rung-Kutta (RK) methods \cite{Sun1993,Sun1995}, have been developed and investigated.  With the establishment of backward error analysis \cite{BenettinOn,TangFormal} and discrete KAM theory \cite{ShangKAM}, the symplectic integrators for Hamiltonian system is becoming more and more perfect.
Many new related numerical methods such as  multi-symplectic methods for Hamiltonian partial differential equations \cite{Qin2011} and stochastic symplectic methods for random Hamiltonian systems \cite{ChenModified} have been well developed.  In addition to the symplectic property, another extremely important feature of the Hamiltonian system is the conservation of energy. Therefore, it is very important to keep the system energy by numerical method. The important energy preserving methods include discrete gradient method \cite{Gonzalez1996Time,MclachlanGeometric}, average vector field method \cite{QuispelA}, HBVMs \cite{Brugnano2009Hamiltonian} and spectral methods \cite{Zhang19}

When multiple important structures or physical quantities exist for a system, such as the symplectic structure and the energy for Hamiltonian system, a natural question arises:  is it possible to construct a numerical method that preserves several of them? Regarding the symplectic structure and the energy for Hamiltonian system,  one has unfortunately a negative answer in general for constant step size. In fact, it is proved \cite{GeLie} that for non-ingregrable systems if the method is symplectic and can conserve the Hamiltonian energy exactly, then it is the time advance map for the exact Hamiltonian systems up to a reparametrization of time. A similar negative results is proved in \cite{ChartierAn} for general Hamiltonian system by B-series method. But the above negative results do not prevent people from constructing numerical methods to maintain both the energy and symplectic structure of the Hamilton system in some weaker sense. 

A breakthrough work in this regard is the parameterized Gauss collocation method firstly developed by Brugnano in \cite{BrugnanoEnergy}.  
Consider the canonical Hamiltonian systems in the form 
\begin{equation} \label{eq:Hamilton}
\begin{split}
\begin{cases}
&\dot{y}=J\nabla H (y), \\
&y(t_0)=y_0\in\mathbb{R}^{2d},\\
\end{cases}
\quad J=
\begin{pmatrix}
0 & I \\
-I & 0
\end{pmatrix}
\in\mathbb{R}^{2d\times 2d},
\end{split}
 \end{equation}
where $y=(p^T, q^T)^{T}$ and $I$ is the identity matrix and $H$ is the Hamiltonian energy.  Brugnano et.al. introduced a nice idea to
develop a new family of Gauss type methods which share both the symplecticity-like and energy conservation features under suitable
conditions. More precisely, they define a family of RK methods $y_{1}(\alpha)=\Phi_{h}(y_{0},\alpha)$, where $h$ is the step size
of integration, $\alpha$ is a real parameter. This method satisfies the following three conditions simultaneously:
(i) for $\alpha=0$ one gets the Gauss collocation method of order
$2s$, $s$ is the number of stages of the RK method; (ii) for any fixed choice of $\alpha\neq 0$, the corresponding
method is of order $2s-2$ and satisfies the conditions $b_{i}a_{i,j}+b_{j}a_{j,i}=b_{i}b_{j}$, thus being a quadratic invariant-preserving RK method;
 (iii) for any choice of $y_{0}$ and in a neighborhood of $h$, there
exists a value of parameter $\alpha^{\ast}=\alpha^{\ast}(y_{0},h)$ such that $H(y_{1})=H(y_{0})$(energy conservation). The
resulting method $y_{1}=\Phi_{h}(y_{0},\alpha^{*})$ has order $2s$, preserves the energy and quadratic invariants
\footnote{For stand RK method $(c,A,b)$ with constant step size $h>0$, the sufficient condition of symplecticity is given by $b_{i}a_{i,j}+b_{j}a_{j,i}=b_{i}b_{j}$ for all $i, j=1,2,...,s$. For irreducible RK method, which can be intuitively understood to mean that this RK method is not equivalent to a RK method with a lower series, see more details in \cite[pp.187]{Hairer2006}, this condition is also necessary, and also sufficient and necessary for quadratic invariants-preserving.  However, for the parametric $\alpha$-RK methods $y_{1}(\alpha)=\Phi_{h}(y_{0},\alpha^{\ast})$ that preserve the energy,  the parameter depends on the initial values, i.e., $\alpha^{\ast}=\alpha^{\ast}(y_{0},h)$. Then, $\alpha$-RK methods with condition $b_{i}a_{i,j}+b_{j}a_{j,i}=b_{i}b_{j}$ are in general not symplectic from the definition, but can preserve all the quadratic invariants.}.

This method  has been rewritten in the framework of discrete linear integral methods \cite{BrugnanoAnalysis}, which leads to a more refined theoretical analysis and a nice practical implementation strategy for seeking the parameter $\alpha^{*}$. Another extension form $\alpha$-RK to $\alpha$-PRK to make use of the additive structure of Hamiltonian system is given in \cite{WangParametric}. 
The key of constructing parameterized RK or PRK methods in \cite{BrugnanoEnergy,WangParametric} lies in the so-called $W$-transform technique. 
However, this technique is in general not suitable for constrained Hamiltonian systems since it will broke the constrained manifolds. See more details in 
Section \ref{sec:EQIP}.

In this paper, we focus on the geometric integrators for Hamiltonian systems subject to holonomic constraints. 
Consider the Hamiltonian with holonomic constraints (constraints that depend on $q$ only)  \cite[Chap. VII]{hairer2006geometric}
\begin{gather}\label{eq:constraintHamil}
\widetilde{H}(p, q, \lambda)=H(p, q)+g(q)^T\lambda,
\end{gather}
where $H: \R^d\times\R^d\to \mathbb{R}$ is the Hamiltonian function without constraint and $g(\cdot): \R^d\to \R^m$ is the constraint function.
Here, $\lambda\in \mathbb{R}^m$ is the Lagrangian multiplier.
The ODE corresponding to \eqref{eq:constraintHamil} is given by
(see \cite[VII.1.2]{hairer2006geometric}) 
\begin{equation} \label{eq:conHam}
\begin{split}
&\dot{q}=\nabla_pH(p, q)=:H_p(p, q),\\
&\dot{p}=-\nabla_qH(p, q)- G^{T}(q)\lambda=:-H_q(p, q)- G^{T}(q)\lambda,\\
&0=g(q),
\end{split}
\end{equation}
where we introduced
$G(q)=\left(\frac{\partial g_i}{\partial q_j}\right)_{m\times d}$
to be the Jacobian matrix of $g(q)$ so that
$G^{T}(q)=\nabla_qg \in \mathbb{R}^{d\times m}.$
Note that we use the convention $(\nabla_q g)_{ij}=\frac{\partial g_j}{\partial q_i}$ which is commonly used in the community of fluid mechanics.
Throughout this paper, we assume that the matrix $G(q)$ has full rank and $G(q)H_{pp}(p, q)G(q)^{T}$ is invertible, which will allow us to express $\lambda$ in terms of $(p, q)$. The dynamics given by \eqref{eq:conHam}
is on the manifold
\begin{equation} \label{eq:manifold}
\begin{split}
\mathcal{M}=\left\{(p,q);\quad  g(q)=0,~ G(q)H_p(p, q)=0 \right\},
\end{split}
\end{equation}
which is the cotangent bundle of the manifold given by 
$\mathcal{Q}:=\{q: g(q)=0\}. $
See \cite[VII.1.2]{hairer2006geometric} for more details.
The constraint $G(q)H_p(p, q)=0$ is also called the hidden constraint. 

It is easily to verify that the ODE \eqref{eq:conHam} has many geometric structures. First of all, the Hamiltonian (or energy) $H$ is preserved:
\begin{gather}
\frac{d}{dt}H(p(t), q(t))=0.
\end{gather}
Secondly, the symplecticity is also preserved. Consider a map
$\varphi: \cM\to \cM$ and its tangent map $\varphi': T_x\cM\to T_x\cM$. 
One says the map $\varphi$ is symplectic if
\begin{gather}
(\varphi'(x)\xi_1)^T J(\varphi'(x)\xi_2)=\xi_1^TJ\xi_2,~\forall \xi_1,\xi_2\in T_x\mathcal{M}.
\end{gather}
For $\cM$ being a submanifold of $\R^{2d}$, $\varphi'=(\nabla_x\varphi)^T$ so that
\[
\varphi'\xi=\xi\cdot\nabla_x\varphi=\left.\frac{d}{d\tau}\right|_{\tau=0}\varphi(\gamma(\tau))
\]
for any smooth curve $\gamma$ in $\cM$ satisfying $\gamma(0)=x, \gamma'(0)=\xi$. 
It can be shown that the flow map of \eqref{eq:conHam} is symplectic.
Plainly speaking, the symplecticity means that the flow preserves the quadratic forms, which further implies that the volume form is conserved
(see the introduction of \cite{jay1996symplectic}).

From the pointview of structure-preserving methods, if one solves the Hamiltonian system \eqref{eq:conHam} numerically, besides the basic requirement that the solution falls onto the manifold $\cM$, one also tends to ask that the energy preservation or the symplecticity is satisfied.
This additional requirement for staying on manifolds $\mathcal{M}$ for constrained Hamiltonian systems presents new difficulties in the construction of structure preserving numerical schemes.  

There are many symplectic methods for the holonomic constrained Hamiltonian system. One typical class is the PRK methods as detailed explained in \cite{jay1996symplectic,marsden2001discrete,jay2002iterative}. The first order method in 
\cite[sec VII.1.3]{hairer2006geometric} and the Shake-Rattle method in \cite{andersen1983rattle, hairer2006geometric} are special cases of the PRK methods.
Symplectic variational integrators for constrained Hamiltonian systems are developed in \cite{marsden2001discrete,Wenger2017Construction}.
Those methods are symplectic and preserve the manifolds $\mathcal{M}$ exactly, but they are not Hamiltonian energy converved. The second order energy-conserving method in the line integral framework developed in \cite{BrugnanoLine} is not symplectic in general and can not conserve the the hidden constraint $G(q)H_p(p, q)=0$.  The projected RK methods are proposed in \cite{WeiProjected}, but it is not symplectic.

The goal of this work is to introduce a parameter in the PRK methods so that both the energy and the quadratic invariants can be preserved for the constrained Hamiltonian system following the spirit of \cite{BrugnanoEnergy,WangParametric}.
Note that if we do this method for every trajectory, then the energy can be preserved for all trajectories and the method is in general not symplectic.
However,  if we fix down the parameters, the method is symplectic and can be preserve energy for one trajectory.

The rest of the paper is arranged as follows. In section \ref{sec:symPRK}, we recall some of the basic concepts and properties of symplectic PRK methods for constrained Hamiltonian systems. In order to show the symplectic PRK methods preserve the quadratic invariants for Hamiltonian systems subject to holonomic constraints, we provide a new variational formulation for symplectic PRK methods in section \ref{sec:variationalfor}.  In section \ref{sec:EQIP}, we construct parametrization $\alpha$-PRK methods based on Shake-Rattle method and Lobatto IIIA-IIIB pairs, and check various properties of the new schemes.  Numerical examples are included in section \ref{sec:numer} to illustrate the energy and quadratic invariants conservations of the $\alpha$-PRK methods.

\section{Symplectic Partitioned Runge-Kutta methods}
\label{sec:symPRK}

In this subsection, we give a brief review of the symplectic PRK methods. See \cite{jay1996symplectic,hairer2006geometric} for more details.
The $s$-stage PRK methods denoted by $\left(c, A, b; \widehat{A},  \widehat{b} \right)$ for the Hamiltonian ODE \eqref{eq:conHam} is given by the following
\begin{gather}\label{eq:PRK}
\begin{split}
& p_1=p_0+h\sum_{i=1}^s \hat{b}_i \ell_i,~~~
q_1=q_0+h\sum_{i=1}^s b_i k_i.\\
& \ell_i=-H_q(P_i, Q_i)- G^{T}(Q_i) \Lambda_i,~
k_i=H_p(P_i, Q_i), ~g(Q_i)=0,\\
& P_i=p_0+h\sum_{j=1}^s \hat{a}_{ij}\ell_j,~~~
Q_i=q_0+h\sum_{j=1}^s a_{ij}k_j.
\end{split}
\end{gather}
The equations considered here are autonomous, so the parameters $c$ for the Runge-Kutta methods are absent in \eqref{eq:PRK}. See section \ref{sec:PRKIII} for more details.

\begin{lemma}
\label{lem:symcondition}
\cite{jay1996symplectic} For a given a PRK method, if the conditions for symplecticity 
\begin{gather}\label{eq:prksymplecticcond}
\begin{split}
& b_i=\hat{b}_i,\\
& b_i \hat{a}_{ij}+\hat{b}_ja_{ji}=b_i \hat{b}_j,
\end{split}
\end{gather}
 is satisfied, then the method for Hamiltonian systems with holonomic constraints is also symplectic.
\end{lemma}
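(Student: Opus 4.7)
The plan is to prove $dp_1\wedge dq_1 = dp_0\wedge dq_0$ when pulled back to the constraint manifold $\cM$. The strategy mirrors the classical unconstrained PRK symplecticity proof (take exterior derivatives of the scheme, substitute in the stage relations, apply the two $(A,b,\hat A,\hat b)$ conditions), with extra care needed for the Lagrange-multiplier contribution hidden in $\ell_i$.

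First, I would treat $dp_0, dq_0, dP_i, dQ_i, d\Lambda_i, dk_i, d\ell_i$ as $1$-forms on the space of admissible initial data and extract two identities from \eqref{eq:PRK}. The stage equations give
\begin{equation*}
dp_0 = dP_i - h\sum_{j=1}^s \hat a_{ij}\,d\ell_j, \qquad dq_0 = dQ_i - h\sum_{j=1}^s a_{ij}\,dk_j,
\end{equation*}
and because the scheme enforces $g(Q_i)=0$ identically in the initial data,
\begin{equation*}
G(Q_i)\,dQ_i = 0.
\end{equation*}

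Second, I would expand $dp_1\wedge dq_1$ using the update formulas for $p_1$ and $q_1$, substitute the above stage expressions for $dp_0$ and $dq_0$, and apply the symplecticity conditions $b_i=\hat b_i$ and $b_i\hat a_{ij}+\hat b_j a_{ji}=b_i\hat b_j$. The same index manipulation as in the unconstrained PRK case collapses everything into
\begin{equation*}
dp_1\wedge dq_1 - dp_0\wedge dq_0 = h\sum_{i=1}^s b_i \bigl(dP_i\wedge dk_i + d\ell_i\wedge dQ_i\bigr).
\end{equation*}

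Third, I would substitute $k_i=H_p(P_i,Q_i)$ and $\ell_i=-H_q(P_i,Q_i)-G^T(Q_i)\Lambda_i$. The purely Hamiltonian contributions cancel because $H_{pq}=H_{qp}^T$, exactly as in the unconstrained proof. The remaining piece is the constraint term $-\sum_i b_i\, d\bigl(G^T(Q_i)\Lambda_i\bigr)\wedge dQ_i$. Expanding the differential, the part involving $d\Lambda_i$ reassembles as $\sum_k d\Lambda_{i,k}\wedge (G(Q_i)dQ_i)_k$, which vanishes by the identity obtained in the first step. The part involving second derivatives of $g$ has the shape $\sum_{j,k,l}(\partial_{q_j}\partial_{q_l}g_k)\Lambda_{i,k}\, dQ_{i,l}\wedge dQ_{i,j}$, which vanishes by antisymmetry of the wedge against symmetry of $\partial^2 g_k$. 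Hence $dp_1\wedge dq_1 = dp_0\wedge dq_0$ on $\cM$.

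The main obstacle is the Lagrange-multiplier contribution: a priori, $d\Lambda_i$ is an unknown $1$-form and could destroy the cancellations that yield symplecticity in the unconstrained case. The crucial observation is that the scheme enforces $g(Q_i)\equiv 0$ as a function of the initial data, so $G(Q_i)\,dQ_i$ vanishes as a $1$-form; this is precisely what annihilates the $d\Lambda_i$ contribution and allows the classical cancellation to go through.
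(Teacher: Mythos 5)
Your argument is correct and is exactly the standard proof of this result: the paper itself states Lemma \ref{lem:symcondition} without proof, citing \cite{jay1996symplectic}, and the proof given there proceeds by the same exterior-derivative computation — collapse $dp_1\wedge dq_1 - dp_0\wedge dq_0$ to $h\sum_i b_i\bigl(dP_i\wedge dk_i + d\ell_i\wedge dQ_i\bigr)$ via the coefficient conditions, cancel the Hamiltonian terms by symmetry of second derivatives, and kill the multiplier terms using $G(Q_i)\,dQ_i = dg(Q_i) = 0$ on the manifold of consistent data together with the symmetry of $\nabla^2 g_k$. Your identification of the $d\Lambda_i$ contribution as the only genuinely new obstacle, and its annihilation by the differentiated stage constraints, is precisely the key point of the constrained case.
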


For $s$-stage PRK methods, if $a_{ij}, \hat{a}_{ij}, b_i, \hat{b}_i$ are given, then there are $3s$ 
equations for the $3s$ unknowns ($P_i, Q_i, \Lambda_i$).
However, for general symplectic PRK methods, $(p_1, q_1)$ may not fall onto $\cM$.  To ensure that $q_1$ satisfies the constraint, it is suggested in \cite{jay1996symplectic} to impose the conditions
\begin{gather}\label{eq:prkstiffaccurate}
a_{1j}=0,~~~a_{sj}=b_j,~~1\le j\le s,
\end{gather}
so that
\begin{gather}
Q_1\equiv q_0,~~~Q_s\equiv q_1.
\end{gather}
With this, one has correspondingly
\begin{gather}
\hat{a}_{js}=0,~\hat{a}_{j1}=b_1, ~~ j=1,2,...,s,
\end{gather}
from the symplectic condition given in Lemma \ref{lem:symcondition}. This requirement removes the unknowns $Q_1,\Lambda_s$ and the equations  $g(Q_1)=0$, $Q_1=q_0+h\sum_{j=1}^s a_{ij}k_j$.
 Hence,  the number of equations is now equal to the number of unknowns 
 $\left(Q_2,\ldots, Q_s, P_1,\ldots, P_s, \Lambda_1,\ldots, \Lambda_{s-1}\right)$.
In fact, it is shown in \cite{jay1996symplectic} that these variables can be uniquely solved when $h$ is small enough under some reasonable assumptions on $H$ and $\left(c, A, b; \widehat{A},  \widehat{b} \right)$.
We remark that even though one imposes $Q_1=q_0$, the Lagrange multiplier $\Lambda_1$ is still in the system of equations.
Lastly, $p_1$ and $\Lambda_s$ can be determined by the hidden constraint condition
$0=G(q_1)\cdot H_p(p_1, q_1).$

For the PRK methods suggested in \cite{jay1996symplectic}, we hence solve for the next two nonlinear systems in turn at each step.
The first one 
\begin{equation} \label{eq:PRK1}
\begin{split}
\begin{cases}
&Q_i=q_0+h\sum\limits_{j=1}^s a_{ij}H_p(P_j, Q_j),\\
&P_i=p_0-h\sum\limits_{j=1}^{s-1} \hat{a}_{ij} \left(H_q(P_j, Q_j)- G^{T}(Q_j) \Lambda_j \right),\\
&0=g(Q_i).\\
\end{cases}
\end{split}
 \end{equation}
 with $Q_1\equiv q_0, Q_s\equiv q_1.$
Then, we can solve $(p_1, \Lambda_s)$ by
 \begin{equation} \label{eq:PRK2}
\begin{cases}
p_1&=p_0-h\sum\limits_{i=1}^{s-1} \hat{b}_{i} \left(H_q(P_i, Q_i)- G^{T}(Q_i) \Lambda_i \right)-h\hat{b}_{s} \left(H_q(P_s, Q_s)- G^{T}(Q_s) \Lambda_s \right),\\
0&=G(q_1)H_{p}(p_1, q_1).
\end{cases}
 \end{equation}

As commented in \cite[sec VI.6.2]{hairer2006geometric}, the symplectic 
schemes can be written as certain variational integrators. In \cite[sec VI.6.3]{hairer2006geometric}, it was explained in detail how the PRK schemes for unconstrained problems can be formulated into a variational integrator. The same is true for Hamiltonian systems with holonomic constraints. In fact, Marsden and West addressed this issue in 
\cite[section 3.5.6]{marsden2001discrete}.
In particular, given $(q_0, q_1)\in \mathbb{R}^d\times \mathbb{R}^d$, one can implicitly define quantities 
$\left(\bar{p}_0, \bar{p}_1, \bar{Q}_i, \bar{P}_i, \dot{\bar{P}}_i, \dot{\bar{Q}}_i \right)$
for $i=1,\ldots, s$ and $\bar{\Lambda}_i$ for $i=2,\ldots, s-1$ through the following system of equations
\begin{gather}\label{eq:mvconstraints}
\begin{split}
q_1&=q_0+h\sum_{j=1}^s b_j\dot{\bar{Q}}_j, \quad \bar{p}_1=\bar{p}_0+h\sum_{j=1}^s \hat{b}_j\dot{\bar{P}}_j, \\
\bar{Q}_i&=q_0+h\sum_{j=1}^s a_{ij}\dot{\bar{Q}}_j, \quad \bar{P}_i=\bar{p}_0+h\sum_{j=1}^s \hat{a}_{ij}\dot{\bar{P}}_j,~~i=1,\ldots, s,\\
\dot{\bar{Q}}_i&=H_p (\bar{Q}_i,\bar{P}_i),~~i=1,\ldots, s;   \dot{\bar{P}}_1
=-H_q(\bar{Q}_1, \bar{P}_1), ~~\dot{\bar{P}}_s
=-H_q(\bar{Q}_s, \bar{P}_s)\\
\dot{\bar{P}}_i&= -H_q(P_i, Q_i)- G^{T}(Q_i) \Lambda_i,~~~
0=g(Q_i),~~i=2,\ldots, s-1.
\end{split}
\end{gather}
Note that the definitions of $\dot{\bar{P}}_1$ and $\dot{\bar{P}}_s$ above
are different from other $\dot{\bar{P}}_i$ (in fact, $\dot{\bar{P}}_i=\dot{P}_i$ for $i\neq 1,s$ while there should be some corrections for $\dot{\bar{P}}_1$ and $\dot{\bar{P}}_s$ to define $\dot{P}_1, \dot{P}_s$). Moreover, $\bar{p}_1$ and $\bar{p}_0$ are not $p_1, p_0$ in \eqref{eq:PRK}. They are different by some terms involving the corresponding Lagrange multipliers.  See the proof of \cite[Theorem 3.5.1]{marsden2001discrete}
for the details. With the quantities defined in \eqref{eq:mvconstraints}, one can define the discrete Lagrangian
\begin{gather}\label{eq:mvdiscretelag}
L_h(q_0, q_1)=h\sum_{i=1}^s b_i L(\bar{Q}_i, \dot{\bar{Q}}_i),
\end{gather}
where $L(q, \dot{q})$
is the Lagrangian for the time-continuous dynamics.
The Euler-Lagrange equation for this discrete Lagrangian under the holonomic constraints
$(q_0, q_1)\in \mathcal{Q}\times \mathcal{Q}$
reads
\begin{gather}\label{eq:discreteELconstraint}
\begin{split}
& p_0=-\frac{\partial L_h}{\partial x}(q_0, q_1)+h G^T(q_0)\cdot \lambda_0,\\
& p_1=\frac{\partial L_h}{\partial y}(q_0, q_1)-h G^T(q_1)\cdot \lambda_1,\\
& g(q_1)=0,\\
& G(q_1)H_p(p_1, q_1)=0.
\end{split}
\end{gather}
See \cite[eqs. (3.5.2a-d)]{marsden2001discrete}. Note that though the Euler-Lagrange equations are derived by fixing $q_0, q_1$, one should regard $p_0, q_0$ as knowns  and solve $(p_1, q_1)$ dynamically.
The good thing of this variational formulation is that many conservation properties can be derived via the discrete Noether theorem. See \cite{marsden2001discrete} and also section \ref{sec:variationalfor} below for more details on conservation properties and discrete Noether theorem. However, for quadratic invariants, the form  \eqref{eq:mvconstraints}-\eqref{eq:mvdiscretelag} is not convenient to use and we will propose another variational formulation below in section \ref{sec:variationalfor}.

\subsection{PRK schemes: Shake-Rattle and Lobatto IIIA-IIIB pairs}
\label{subsec:shakerattle}

We first recall some famous PRK methods, including Shake-Rattle algorithm and Lobatto IIIA-IIIB pairs, 
which will be used to construct the parameterization $\alpha$-PRK later.
For typical separable Hamiltonian 
\[
H(p, q)=\frac{1}{2}p^TM^{-1}p+U(q),
\]
the traditional Shake method reads
\[
\begin{split}
& q_{n+1}-2q_n+q_{n-1}=-h^2M^{-1}(U_q(q_n)+G(q_n)^T\Lambda_n ),\\
& 0=g(q_{n+1})
\end{split}
\]
Then, the momentum is
$p_n=M(q_{n+1}-q_{n-1})/(2h).$
The corresponding Hamiltonian formulation for the Shake method is
\[
\begin{split}
& p_{n+1/2}=p_n-\frac{h}{2}(U_q(q_n)+G(q_n)^T\Lambda_n ),\\
& q_{n+1}=q_n + h M^{-1}p_{n+1/2},~~g(q_{n+1})=0,\\
& p_{n+1}=p_{n+1/2}-\frac{h}{2}(U_q(q_{n+1}) +G(q_{n+1})^T\Lambda_{n+1})
\end{split}
\]
Unfortunately, so-defined $p_n$ may not satisfy the constraint. 
Anderson \cite{andersen1983rattle} proposed the Rattle algorithm. That is to determine $p_{n+1}$ by using another multiplier $\mu_n$ which can be different from $\Lambda_{n+1}$:
\[
p_{n+1}=p_{n+1/2}-\frac{h}{2}(U_q(q_{n+1}) +G(q_{n+1})^T\mu_n ).
\]
Then, one uses the condition
\[
G(q_{n+1})\cdot M^{-1}p_{n+1}=0
\]
to determine $\mu_n$ and thus $p_{n+1}$. Clearly, this Shake-Rattle algorithm is one of the symplectic PRK method mentioned above. 
The more general form of Shake-Rattle for the general Hamiltonian system with  holonomic constraints reads
\begin{gather}
\begin{split}
& p_{n+1/2}=p_n-\frac{h}{2}(\nabla_qH(p_{n+1/2}, q_n)+G^T(q_n) \Lambda_n ),\\
& q_{n+1}=q_n + \frac{h}{2}(\nabla_pH(p_{n+1/2}, q_n)+\nabla_pH(p_{n+1/2}, q_{n+1})),~~~~g(q_{n+1})=0,\\
& p_{n+1}=p_{n+1/2}-\frac{h}{2}(\nabla_qH(p_{n+1/2}, q_{n+1}) +G^T(q_{n+1})\Lambda_{n+1})\\
& G(q_{n+1})\cdot p_{n+1}=0.
\end{split}
\end{gather}
It is proved that the Shake-Rattle algorithm is symmetric, symplectic and convergent of order two \cite[sec VII.I]{hairer2006geometric}.

The Shake-Rattle algorithm is second order. A nice extension to higher order is the Lobatto IIIA-IIIB pairs developed in \cite{jay1996symplectic}.
This method combined with the projection step by the hidden constraint condition defined in the manifold $\mathcal{M}$ has the following merits.
(a) preserving the numerical solutions on the manifold $\mathcal{M}$ exactly; (b) it is symmetric, symplectic and super convergent of order $2s-2$, the $s$ is the stage of PRK method. In particular, for $s=3$, the coefficients for Lobatto IIIA-IIIB pairs are given by 
\begin{equation} \label{eq:3IIIA-IIIB0}
\begin{split}
\begin{array}{c|ccc}
  0& 0 & 0 &0 \\
  \frac{1}{2}& \frac{5}{24} &\frac{1}{3} &-\frac{1}{24}\\
  1& \frac{1}{6} & \frac{2}{3} &  \frac{1}{6}\\
  \hline
  & \frac{1}{6} & \frac{2}{3} &  \frac{1}{6}\\
\end{array},\quad\quad~
\begin{array}{c|ccc}
  0& \frac{1}{6} & -\frac{1}{6} &0\\
  \frac{1}{2}& \frac{1}{6} &\frac{1}{3} &0\\
  1& \frac{1}{6} & \frac{5}{6} &  0\\
  \hline
  & \frac{1}{6} & \frac{2}{3} &  \frac{1}{6}\\
\end{array}.
\end{split}
 \end{equation}

\section{An alternative variational formulation for symplectic PRK schemes with holonomic constraints}
\label{sec:variationalfor}

In this section, we aim to study the quadratic invariants of the PRK schemes via the discrete Noether's theorem. For this purpose, we need to find some equivalent variational forms for the PRK scheme. However, the form in the work of Marsden and West (i.e. \eqref{eq:mvconstraints}-\eqref{eq:mvdiscretelag}) is not convenient, as it involves Lagrangian multipliers and there is nonlinearity. Instead, we will propose an alternative formulation so that the symmetry can be studied better.

It is known that symplectic PRK schemes for Hamiltonian systems without constraints can conserve quadratic invariants of the form
\begin{gather}
I(p, q)=q^TDp,
\end{gather}
where $D$ is some fixed matrix (\cite[sec VI.2.2]{hairer2006geometric}). 
Naturally, one is curious whether there is an analogue for Hamiltonian systems with holonomic constraints. Clearly, due to the constraint $g(q)=0$, the quadratic invaraints must be of some particular forms. Instead, motivated by Noether's theorem \cite{noether71,arnold13}, we will consider that 
\begin{gather}
\mathscr{D}:=\Big\{D: g\left(e^{sD}q\right)=g(q),~~L\left(e^{sD}q, e^{sD}\dot{q}\right)
=L(q,\dot{q}),~\text{for all}~s\in\R \Big\},
\end{gather}
Clearly, by Noether's theorem, we have the following.
\begin{lemma}
For any $D\in \mathscr{D}$, the quantity $q^TDp$ is a first integral of the Hamiltonian system with holonomic constraints.
\end{lemma}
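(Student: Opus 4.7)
My plan is to verify the lemma by direct differentiation along trajectories of \eqref{eq:conHam}, using the two defining conditions of $\mathscr{D}$ in their infinitesimal forms; this is the Hamiltonian analogue of the classical Noether argument adapted to the holonomic-constraint setting.

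First, differentiating both conditions of $\mathscr{D}$ in $s$ at $s = 0$ yields
\[
G(q)\,Dq = 0, \qquad (Dq)^T \nabla_q L(q,\dot q) + (D\dot q)^T \nabla_{\dot q} L(q,\dot q) = 0,
\]
throughout the relevant open subset of phase space. Applying the Legendre transform ($\nabla_{\dot q} L = p$, $\nabla_q L = -H_q$, $\dot q = H_p$) converts the second identity into the Hamiltonian-side statement $(Dq)^T H_q(p,q) = (DH_p(p,q))^T p$, which after transposing the scalar reads $H_q^T Dq = p^T D H_p$.

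Next, I differentiate the candidate invariant along a solution of \eqref{eq:conHam}. Using $\dot q = H_p$ and $\dot p = -H_q - G^T(q)\lambda$,
\[
\frac{d}{dt}\big(p^T D q\big) = \dot p^{\,T} D q + p^T D \dot q = -H_q^T D q - \lambda^T G(q)\,Dq + p^T D H_p.
\]
The middle term vanishes by the infinitesimal constraint $G(q)Dq = 0$, and the outer two terms cancel by the transposed invariance identity $H_q^T Dq = p^T D H_p$ obtained in the previous step. Hence the Noether charge $p^T D q$ is constant along the constrained flow.

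The only real subtlety is transpose bookkeeping: the canonical Noether charge attached to the one-parameter group generated by $D$ appears naturally as $p^T D q = q^T D^T p$ rather than literally as $q^T D p$, so to match the form stated in the lemma one should either adopt the convention identifying these two bilinear scalars or replace $D$ by $D^T$ in the definition of $\mathscr{D}$ and rerun the identical argument. The essential structural point that makes the argument robust is that the Lagrange multiplier $\lambda$ enters the derivative only through the combination $\lambda^T G(q)\,Dq$, which is precisely the quantity annihilated by the infinitesimal constraint invariance; this is why the unconstrained Noether cancellation carries through unchanged to the holonomically constrained setting.
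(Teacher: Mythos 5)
Your proof is correct and takes exactly the route the paper intends: the paper offers no written proof (it simply says ``Clearly, by Noether's theorem''), and your direct differentiation --- with the multiplier entering only through $\lambda^T G(q)Dq=0$ and the Legendre identities $\nabla_q H=-\nabla_q L$, $\nabla_p H=\dot q$ killing the remaining terms --- is precisely the continuous counterpart of the paper's own verification of the constrained discrete Noether lemma. Your remark about $p^TDq$ versus $q^TDp$ is a legitimate (and harmless) observation about the paper's transpose conventions, resolved by replacing $D$ with $D^T$ or noting that $\mathscr{D}$ is closed under $D\mapsto -D$ so the two scalars coincide up to sign in the antisymmetric case relevant to the examples.
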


We now aim to show that the symplectic PRK schemes will conserve all the quadratic invaraints of the form $q^TDp$ for $D\in \mathscr{D}$.
\begin{theorem}\label{thm:quadraticinvar}
The symplectic PRK schemes for Hamiltonian system with holonomic constraints defined in \eqref{eq:PRK} conserve all quadratic invariants of the form 
\begin{gather}
I(p, q)=q^TDp,~~D\in \mathscr{D}.
\end{gather}
\end{theorem}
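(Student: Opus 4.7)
The plan is to invoke a discrete Noether's theorem, applied to an alternative variational formulation of the symplectic PRK scheme \eqref{eq:PRK}. Because the Marsden--West formulation \eqref{eq:mvconstraints}--\eqref{eq:mvdiscretelag} is explicitly flagged as inconvenient for quadratic invariants, the first task is to construct a cleaner variational formulation whose stationarity conditions reproduce \eqref{eq:PRK} (equivalently \eqref{eq:PRK1}--\eqref{eq:PRK2}) under the symplecticity conditions \eqref{eq:prksymplecticcond}, and on which the group action generated by $D$ acts transparently. A natural candidate takes $(q_0, q_1)$ as boundary data and the stages $\bar{Q}_i$ (with induced $\dot{\bar{Q}}_i$ via the RK tableau) as bulk variables, with a discrete action of the shape
\begin{equation*}
\mathcal{S}_h(q_0, q_1) = h \sum_{i=1}^s b_i L(\bar{Q}_i, \dot{\bar{Q}}_i) - h\sum_{i} b_i \Lambda_i^T g(\bar{Q}_i),
\end{equation*}
after eliminating the bulk variables through their stationarity equations. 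The critical property to establish is that the boundary derivatives reproduce the PRK momenta, $\partial_{q_0}\mathcal{S}_h = -p_0$ and $\partial_{q_1}\mathcal{S}_h = p_1$.

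With such a formulation in hand, I would verify invariance under the group action. For $D \in \mathscr{D}$, consider the transformation $q_0 \mapsto e^{sD} q_0$, $q_1 \mapsto e^{sD} q_1$; the linearity of the RK interpolation forces $\bar{Q}_i \mapsto e^{sD} \bar{Q}_i$ and $\dot{\bar{Q}}_i \mapsto e^{sD} \dot{\bar{Q}}_i$. The Lagrangian contribution is invariant by the defining property $L(e^{sD}q, e^{sD}\dot{q}) = L(q,\dot{q})$; the constraint terms are invariant because $g(e^{sD}\bar{Q}_i) = g(\bar{Q}_i)$, so the constrained manifold is preserved and the multiplier contributions are unchanged. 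Hence $\mathcal{S}_h(e^{sD}q_0, e^{sD}q_1) = \mathcal{S}_h(q_0, q_1)$ for all $s \in \mathbb{R}$. Differentiating this identity at $s=0$ and using the boundary identities yields
\begin{equation*}
p_1^T D q_1 - p_0^T D q_0 = 0,
\end{equation*}
which is exactly the conservation of the quadratic invariant $q^T D p$ (up to the standard transpose convention for infinitesimal generators).

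The main obstacle is the construction of the alternative variational formulation itself, specifically ensuring its equivalence with the PRK scheme under \eqref{eq:prksymplecticcond} while keeping the action expressed purely in $(q,\dot{q})$ variables on which $e^{sD}$ acts cleanly. Particular care is needed at the endpoint stages, where the stiff-accuracy conditions \eqref{eq:prkstiffaccurate} couple $Q_1,Q_s$ directly to the boundary data $q_0, q_1$, and where the Lagrange multiplier $\Lambda_1$ and the post-step $\Lambda_s$ (determined through the hidden constraint $G(q_1) H_p(p_1,q_1)=0$) require careful bookkeeping to ensure that the multiplier contributions are genuinely invariant rather than merely invariant on-shell. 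Once these delicate matchings are settled, the Noether argument above is a short computation and the theorem follows.
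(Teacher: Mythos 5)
Your strategy is exactly the paper's: replace the Marsden--West formulation by an alternative discrete action of the form $h\sum_i b_i L(Q_i,\dot Q_i)$ extremized over the stage velocities subject to the stage constraints, check that the boundary derivatives reproduce the discrete momenta, observe that linearity of the tableau relations makes the action invariant under $q\mapsto e^{sD}q$ for $D\in\mathscr{D}$, and conclude by the discrete Noether theorem for constrained systems. The invariance and Noether steps are carried out correctly and match the paper's proof of the theorem verbatim in spirit. However, the step you flag as ``the main obstacle'' --- proving that the stationarity conditions of your action, under the symplecticity conditions \eqref{eq:prksymplecticcond} and the stiff-accuracy conditions \eqref{eq:prkstiffaccurate}, are \emph{equivalent} to the PRK scheme \eqref{eq:PRK}, and that the boundary derivatives give the momenta --- is precisely the content of the paper's Proposition \ref{pro:variationalPRK}, whose proof occupies most of the section; you have named it but not supplied it, so your argument is an outline rather than a proof. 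Two smaller points: the boundary identities are not $\partial_{q_0}\mathcal{S}_h=-p_0$ and $\partial_{q_1}\mathcal{S}_h=p_1$ exactly, but carry corrections $hG^T(q_0)\lambda_0$ and $hG^T(q_1)\lambda_1$ from the endpoint constraints (cf.\ \eqref{eq:discreteELconstraint}); these drop out of the Noether computation only because $D\in\mathscr{D}$ forces $G(q)Dq=0$, which is the content of the second part of Lemma \ref{lmm:discretenoether} and should be invoked explicitly. Also, in the paper's formulation only the interior stages $i=2,\dots,s-1$ carry constraint multipliers, with $\Lambda_1,\Lambda_s$ recovered afterwards from $\lambda_0,\lambda_1$; your sum $\sum_i b_i\Lambda_i^T g(\bar Q_i)$ over all $i$ would need the same bookkeeping to match the scheme.
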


To do this, we first recall the discrete Noether's theorem for variational integrators without constraints.
\begin{lemma} [Discrete Noether's theorem]\label{lmm:discretenoether}
Assume the discrete Lagrangian $L_h(q_0,q_1)$ is invariant under a one-parameter group of transformation $\{\mathfrak{g}_s: s\in \mathbb{R} \}$: 
$L_h\left(\mathfrak{g}_s(q_0), \mathfrak{g}_s(q_1)\right)=L_h(q_0, q_1)$ for all $s\in\R$ and $(q_0, q_1)$. Then the corresponding variational integrators for Lagrangian systems have the first integral in the form of 
\[
p_{n+1}^T a(q_{n+1})=p_{n}^T a(q_n)
\]
 where $a(q)=\frac{d}{ds}\mathfrak{g}_s(q)|_{s=0}$ is the generator for the group.
Furthermore, when there are holonomic constraints, besides the conditions above, if moreover $\{\mathfrak{g}_s: s\in \mathbb{R} \}$ leaves the constraints manifold $\mathcal{Q}$ invariant, then the claim still holds.
\end{lemma}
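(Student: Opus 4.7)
The plan is the discrete analogue of the classical Noether computation. Recall that a variational integrator arises from extremizing the discrete action $S_h = \sum_n L_h(q_n, q_{n+1})$ over sequences with fixed endpoints, producing the discrete Euler--Lagrange equations $D_2 L_h(q_{n-1}, q_n) + D_1 L_h(q_n, q_{n+1}) = 0$; the discrete momenta are then defined by the Legendre transforms $p_n = D_2 L_h(q_{n-1}, q_n) = -D_1 L_h(q_n, q_{n+1})$ in the unconstrained case, and by the modified relations \eqref{eq:discreteELconstraint} in the constrained case.

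For the unconstrained part, I would differentiate the invariance hypothesis $L_h(\mathfrak{g}_s q_n, \mathfrak{g}_s q_{n+1}) = L_h(q_n, q_{n+1})$ in $s$ at $s = 0$ to obtain
\[
D_1 L_h(q_n, q_{n+1}) \cdot a(q_n) + D_2 L_h(q_n, q_{n+1}) \cdot a(q_{n+1}) = 0,
\]
and then substitute the Legendre transforms to get $-p_n^T a(q_n) + p_{n+1}^T a(q_{n+1}) = 0$, which is the claim.

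In the constrained case, the same differentiation is combined with $-D_1 L_h(q_n,q_{n+1}) = p_n - h G^T(q_n) \lambda_n$ and $D_2 L_h(q_n,q_{n+1}) = p_{n+1} + h G^T(q_{n+1}) \lambda_{n+1}$, read off from \eqref{eq:discreteELconstraint}, yielding
\[
-p_n^T a(q_n) + p_{n+1}^T a(q_{n+1}) + h \lambda_n^T G(q_n) a(q_n) + h \lambda_{n+1}^T G(q_{n+1}) a(q_{n+1}) = 0.
\]
The key additional input is the hypothesis that $\mathfrak{g}_s$ leaves $\mathcal{Q}$ invariant: for $q \in \mathcal{Q}$ this means $g(\mathfrak{g}_s q) = 0$ for all $s$, and differentiating at $s = 0$ yields $G(q) a(q) = 0$ on $\mathcal{Q}$. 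Evaluating at $q_n, q_{n+1} \in \mathcal{Q}$ kills both multiplier terms and leaves precisely $p_{n+1}^T a(q_{n+1}) = p_n^T a(q_n)$.

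The only genuine subtlety is this infinitesimal consequence of constraint invariance: without the hypothesis that $\mathfrak{g}_s$ acts tangentially to $\mathcal{Q}$, the Lagrange-multiplier contributions would remain and the conservation law would fail. With it, the rest is a direct substitution, and a care with signs in the discrete Legendre transforms is the only computational check required.
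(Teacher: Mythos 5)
Your proof is correct and follows essentially the same route as the paper's: differentiate the invariance of $L_h$ at $s=0$, substitute the constrained discrete Legendre transforms from \eqref{eq:discreteELconstraint}, and use $G(q)a(q)=0$ (the infinitesimal form of the invariance of $\mathcal{Q}$) to annihilate the multiplier terms. The sign bookkeeping in your substitution matches the paper's conventions, so nothing further is needed.
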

The first part can be found in \cite[sec VI.6]{hairer2006geometric}. The second part has been remarked in \cite[sec. 3.4.2]{marsden2001discrete}. For the convenience, we provide a direct verification here.
\begin{proof}[Proof of the second part of Lemma \ref{lmm:discretenoether}]
Taking derivative on $s$ and setting $s=0$ in $L_h\left(\mathfrak{g}_s(q_0), \mathfrak{g}_s(q_1)\right)=L_h(q_0, q_1)$, one has
\[
a(q_0)\cdot \frac{\partial L_h}{\partial x}(q_0, q_1)+a(q_1)\cdot \frac{\partial L_h}{\partial y}(q_0, q_1)=0.
\]
Since $\{\mathfrak{g}_s: s\in \mathbb{R} \}$ leaves the constraints manifold invaraint, 
\[
G(q)a(q)=0 \Rightarrow a(q)\cdot G^T(q)\lambda=0,~\text{for all~}\lambda.
\]
Using \eqref{eq:discreteELconstraint}, one easily finds $p_{1}^T a(q_{1})=p_{0}^T a(q_0)$.
\end{proof}

Below, we propose an alternative variational formulation for the PRK scheme that appears different from  \eqref{eq:mvconstraints}-\eqref{eq:mvdiscretelag}. This formulation is suitable to verify
the conditions for the discrete Noether's theorem.
\begin{proposition}\label{pro:variationalPRK}
The discrete Lagragian for $(q_0, q_1)\in \R^d\times \R^d$ for the symplectic PRK methods (\eqref{eq:PRK} with conditions \eqref{eq:prksymplecticcond}-\eqref{eq:prkstiffaccurate}) can be defined as
\begin{gather}\label{eq:discreteLg}
L_h(q_0, q_1)=\mathrm{ext}_{\left\{\dot{Q}_i \right\}}h \sum_{i=1}^s b_i L(Q_i, \dot{Q}_i)
\end{gather}
with constraints
\begin{eqnarray}
& g(Q_i)=0,~~i=2,\ldots, s-1, \label{eq:qconstraint}\\
& q_1=q_0+h\sum_{j=1}^s a_{ij}\dot{Q}_j \label{eq:q0q1constraint}.
\end{eqnarray}
Above, ``$\mathrm{ext}$'' means extremizing and 
$Q_i=q_0+h\sum_{j=1}^s a_{ij}\dot{Q}_j$.
\end{proposition}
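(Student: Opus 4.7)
The plan is to verify directly that the constrained extremization in \eqref{eq:discreteLg}--\eqref{eq:q0q1constraint}, once fed into the discrete Euler--Lagrange equations with holonomic constraints \eqref{eq:discreteELconstraint}, reproduces the symplectic PRK update \eqref{eq:PRK} under the assumptions \eqref{eq:prksymplecticcond}--\eqref{eq:prkstiffaccurate}. First I would convert the extremization to a standard Lagrange multiplier problem: introduce multipliers $h b_i \Lambda_i$ for the interior stage constraints $g(Q_i)=0$ ($i=2,\ldots,s-1$) and $\nu$ for the terminal constraint $q_1 = q_0 + h\sum_j b_j \dot Q_j$ (recall $a_{sj}=b_j$), and form the augmented action $\mathcal{J}$. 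Setting $P_k := L_{\dot q}(Q_k,\dot Q_k)$ gives, by the Legendre transform, $\dot Q_k = H_p(P_k,Q_k)$ and $L_q(Q_k,\dot Q_k) = -H_q(P_k,Q_k)$.

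The core of the argument is the stationarity in the internal stage variables. Differentiating $\mathcal J$ with respect to each $\dot Q_k$ and using $\partial Q_i/\partial \dot Q_k = h a_{ik}$ yields
\[
b_k P_k \;=\; h\sum_i b_i a_{ik}H_q(P_i,Q_i) + h\sum_{i=2}^{s-1} b_i a_{ik} G^T(Q_i)\Lambda_i - b_k\nu.
\]
Next I would apply the symplecticity identity $b_i a_{ik} = b_k(b_i - \hat a_{ki})$ (a rearrangement of \eqref{eq:prksymplecticcond}) to split the right-hand side into a $k$-independent part and a $\hat a_{ki}$-weighted tail. Using the consequences $\hat a_{k1}=b_1$ and $\hat a_{ks}=0$ of \eqref{eq:prkstiffaccurate}, together with the ``boundary'' contributions $h b_1 G^T(q_0)\Lambda_1$ and $h b_s G^T(q_1)\Lambda_s$ naturally attached to the stages $Q_1=q_0, Q_s=q_1$, this rearranges into exactly the PRK $P$-stage formula
\[
P_k = p_0 - h\sum_{i=1}^s \hat a_{ki}\bigl(H_q(P_i,Q_i) + G^T(Q_i)\Lambda_i\bigr),
\]
provided the $k$-independent constant is identified as a particular combination of $\nu$, $\Lambda_1$ and $H_q(P_i,Q_i)$ that will play the role of $p_0$.

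The final step closes the loop through the discrete Euler--Lagrange system \eqref{eq:discreteELconstraint}. By the envelope theorem, $\partial_{q_0} L_h = -h\sum_i b_i H_q(P_i,Q_i) - h\sum_{i=2}^{s-1} b_i G^T(Q_i)\Lambda_i + \nu$ and $\partial_{q_1} L_h = -\nu$. Substituting into $p_0 = -\partial_{q_0}L_h + hG^T(q_0)\lambda_0$ and $p_1 = \partial_{q_1}L_h - hG^T(q_1)\lambda_1$ determines $\nu$ in terms of $p_1$ and forces the identifications $\lambda_0 = b_1 \Lambda_1$ and $\lambda_1 = b_s \Lambda_s$; combined with the previous step, this recovers the $P$-stage update and also the momentum update $p_1 = p_0 + h\sum_i \hat b_i \ell_i$, while the position constraint $g(q_1)=0$ and the hidden constraint $G(q_1)H_p(p_1,q_1)=0$ come for free from the last two lines of \eqref{eq:discreteELconstraint}. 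The main obstacle I anticipate is precisely this bookkeeping of boundary multipliers: the stage constraints in \eqref{eq:qconstraint} only contribute interior $\Lambda_i$ for $2\le i\le s-1$, whereas the full PRK scheme requires boundary multipliers $\Lambda_1$ and $\Lambda_s$ as well, and showing that they emerge consistently from the discrete Noether boundary multipliers $\lambda_0, \lambda_1$ (rather than from the primal action) is the delicate point where the stiffly accurate relations $\hat a_{k1}=b_1$ and $\hat a_{ks}=0$ play their essential role.
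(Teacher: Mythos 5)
Your proposal is correct and follows essentially the same route as the paper's proof: introduce multipliers $b_i\Lambda_i$ for the interior constraints and a single multiplier for the terminal constraint, pass to Hamiltonian variables via the Legendre transform, use the symplecticity identity $b_i a_{ik}=b_k(b_i-\hat a_{ki})$ together with $\hat a_{k1}=b_1$, $\hat a_{ks}=0$ to recover the $P$-stage formula, and identify $\Lambda_1=\lambda_0/b_1$, $\Lambda_s=\lambda_1/b_s$ from the constrained discrete Euler--Lagrange equations. The only cosmetic difference is that you invoke the envelope theorem to compute $\partial_{q_0}L_h$ and $\partial_{q_1}L_h$, whereas the paper performs the equivalent chain-rule computation explicitly, differentiating the constraints in $q_0,q_1$ and cancelling against the stationarity conditions.
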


\begin{remark}
Note that we are not defining $Q_1, Q_s$ as $a_{si}=b_i,~~a_{1i}=0, b_i\neq 0$, so that
$Q_1=q_0$, $Q_s=q_1$. In this definition, $q_0, q_1$ do not have to be in $\mathcal{Q}$.
\end{remark}

\begin{proof}[Proof of Proposition \ref{pro:variationalPRK}]

{\bf Step--1:}
First of all, we note that the discrete Lagrangian is well-defined.
The conditions for the extremizers are given by
\begin{gather}\label{eq:eqnforext}
\begin{split}
&\left(\sum_{i=1}^s b_i \frac{\partial L}{\partial q}(Q_i, \dot{Q}_i) ha_{ij}\right)
+b_j\frac{\partial L}{\partial \dot{q}}(Q_j, \dot{Q}_j)
=\sum_{i=2}^{s-1}b_i ha_{ij} G^T(Q_i)\Lambda_i +b_j\lambda,\\
& g(Q_i)=0,~i=2,\ldots, s-1,\\
& q_1=q_0+h\sum_{j=1}^s a_{ij}\dot{Q}_j.
\end{split}
\end{gather}
Note that we have built in the constraint $g(Q_i)=0$ by using the multiplier
$b_i\Lambda_i$.
Since $b_i\neq 0$, this will not change anything.
These $sd$ equations together with $g(Q_i)=0$ ($s-2$ equations)
and the constraint ($d$ equations) can yield the solutions $(\{\dot{Q}_j\}_{j=1}^s, \{\Lambda_i\}_{i=2}^{s-1}, \lambda)$ for $h$ sufficiently small. 


Here we are solving $\dot{Q}_i$ in terms of $q_0, q_1$ for any $(q_0, q_1)$ in $\mathbb{R}^d\times \mathbb{R}^d$, not just for $(q_0, q_1)\in \mathcal{Q}\times \mathcal{Q}$.
With this observation, we can take differentiation on $q_{\ell},\ell=0,1$ for the constraint \eqref{eq:q0q1constraint} to find
\begin{gather}\label{eq:constraintq1var}
h\sum_{j=1}^s b_j \frac{\partial \dot{Q}_j}{\partial q_0}=-I,~~~
h\sum_{j=1}^s b_j \frac{\partial \dot{Q}_j}{\partial q_1}=I,
\end{gather}
and for the constraints \eqref{eq:qconstraint} to find
\begin{gather}\label{eq:constraintQvar}
h\left(\sum_{j=1}^s a_{ij}\frac{\partial \dot{Q}_j}{\partial q_0}\right)G^T(Q_i)=-G^T(Q_i),
~~~h\left(\sum_{j=1}^s a_{ij}\frac{\partial \dot{Q}_j}{\partial q_1}\right)G^T(Q_i)=0.
\end{gather}
Note that here we used the convention 
\[
\frac{\partial \dot{Q}_j}{\partial q_0}:=\nabla_{q_0}\dot{Q}_j,
~~~\left(\frac{\partial \dot{Q}_j}{\partial q_0}\right)_{\ell,m}=\frac{\partial \dot{Q}_j^{(m)}}{\partial q_0^{(\ell)}}.
\]
The reason to do this is that we have made $\frac{\partial L}{\partial q}$
a column vector.

{\bf Step--2:} We derive the equation for $p_i$ defined in \eqref{eq:discreteELconstraint}.
By the first equation in \eqref{eq:discreteELconstraint}, one has
\begin{gather*}
\begin{split}
p_0
&=-\left(h\sum_{i=1}^s b_i \left[I+h\sum_{j=1}^s a_{ij}\frac{\partial \dot{Q}_j}{\partial q_0}\right] \frac{\partial L}{\partial q}(Q_i,\dot{Q}_i)
+h\sum_{i=1}^s b_i  \frac{\partial\dot{Q}_i}{\partial q_0}\frac{\partial L}{\partial \dot{q}}(Q_i,\dot{Q}_i)\right)+h G^T(q_0)\cdot \lambda_0\\
&=-h\sum_{i=1}^s b_i \frac{\partial L}{\partial q}(Q_i,\dot{Q}_i)
-h\sum_{j=1}^s \frac{\partial \dot{Q}_j}{\partial q_0} \left[\sum_{i=2}^{s-1} h b_i a_{ij}G^T(Q_i)\Lambda_i+b_j\lambda \right]
+h G^T(q_0)\cdot \lambda_0\\
&=-h\sum_{i=1}^s b_i \frac{\partial L}{\partial q}(Q_i,\dot{Q}_i)
+h\sum_{i=2}^{s-1}b_i G^T(Q_i)\Lambda_i+\lambda+h G^T(q_0)\cdot \lambda_0.
\end{split}
\end{gather*}
Above, the second inequality follows from \eqref{eq:eqnforext} while last inequality follows from \eqref{eq:constraintq1var} and \eqref{eq:constraintQvar}.
Similarly, we can compute by the second equation in \eqref{eq:discreteELconstraint} that
\begin{gather*}
\begin{split}
p_1 &=h\sum_{i=1}^s b_i h\sum_{j=1}^s a_{ij}\frac{\partial \dot{Q}_j}{\partial q_1}\frac{\partial L}{\partial q}(Q_i,\dot{Q}_i)
+h\sum_{i=1}^s b_i  \frac{\partial\dot{Q}_i}{\partial q_1}\frac{\partial L}{\partial \dot{q}}(Q_i,\dot{Q}_i)-hG^T(q_1)\cdot\lambda_1\\
&=h\sum_{j=1}^s\frac{\partial \dot{Q}_j}{\partial q_1}
\left[ \sum_{i=2}^{s-1} h b_i a_{ij}G^T(Q_i)\Lambda_i+b_j\lambda\right]
-hG^T(q_1)\cdot\lambda_1\\
&=\lambda-hG^T(q_1)\cdot\lambda_1.
\end{split}
\end{gather*}
Defining that
$\Lambda_1:=\lambda_0/b_1,  \Lambda_s:=\lambda_1/b_s,$
one has
\begin{gather}
p_1=p_0+h\sum_{i=1}^s b_i \frac{\partial L}{\partial q}(Q_i,\dot{Q}_i)
-h\sum_{i=1}^{s}b_i G^T(Q_i)\Lambda_i.
\end{gather}

{\bf Step--3:}
We verify that the Euler-Lagrange equations \eqref{eq:discreteELconstraint} given by so-defined
discrete Lagrange is equivalent to the symplectic PRK methods with constraints.
Recall that the (continuous) Hamiltonian is related to Lagrangian by
$H(p, q)=p\cdot \dot{q}-L(q, \dot{q}),~~p=\frac{\partial L}{\partial \dot{q}},$
where $\dot{q}=\dot{q}(p, q)$ is determined by the second equation.
It can be computed that
$\dot{q}=\frac{\partial H(p,q)}{\partial p},~~~\frac{\partial H(p,q)}{\partial q}
=-\frac{\partial L(q,\dot{q})}{\partial q}.$
Hence, we have
\begin{eqnarray}
Q_i=q_0+h\sum_{j=1}^s a_{ij}H_p(P_i, Q_i),
~~~q_1=q_0+h\sum_{j=1}^s a_{ij}H_p(P_i, Q_i),\label{eq:conclude1}\\
p_1=p_0-h\sum_{i=1}^s b_i H_q(P_i, Q_i)
-h\sum_{i=1}^{s}b_i G^T(Q_i)\Lambda_i,\label{eq:p0p1eqn}
\end{eqnarray}
where
$P_i=\frac{\partial L(Q_i, \dot{Q}_i)}{\partial \dot{Q}_i}.$
The first equation in \eqref{eq:eqnforext} can then be rewritten as
\[
\sum_{i=1}^s hb_ia_{ij}(-H_q(P_i, Q_i))
+b_j P_j=\sum_{i=2}^{s-1}b_i h a_{ij}G^T(Q_i)\Lambda_i
+b_j[p_1+hb_sG^T(q_1)\cdot\Lambda_s]
\]
Using \eqref{eq:p0p1eqn}, we then have
\begin{gather}
P_j=p_0-h\sum_{i=1}^s[b_i-b_ia_{ij}/b_j]H_q(P_i, Q_i)
-\sum_{i=2}^{s-1}[b_i-b_ia_{ij}/b_j] G^T(Q_i)\Lambda_i
-hb_1G^T(Q_1)\cdot\Lambda_1.
\end{gather}
Using the condition $a_{1j}=0, a_{sj}=b_j$, this is further simplified to
\begin{gather}\label{eq:concludePj}
P_j=p_0-h\sum_{i=1}^s[b_i-b_ia_{ij}/b_j]H_q(P_i, Q_i)
-\sum_{i=1}^{s}[b_i-b_ia_{ij}/b_j] G^T(Q_i)\Lambda_i.
\end{gather}
Defining 
$\hat{a}_{ji}=b_i-b_ia_{ij}/b_j,$
one has the desired form for $P_j$.

Eventually, we find \eqref{eq:conclude1}, \eqref{eq:p0p1eqn}, 
\eqref{eq:concludePj} and \eqref{eq:qconstraint}--\eqref{eq:q0q1constraint} then form a complete system that is the same as the symplectic PRK method.
\end{proof}

Lastly, we verify Theorem \ref{thm:quadraticinvar}.
\begin{proof}[Proof of Theorem \ref{thm:quadraticinvar}]
The constraint condition is obvious.
We verify that 
\begin{gather}\label{eq:discreteaux1}
L_h(e^{sD}q_0, e^{sD}q_1)=L_h(q_0, q_1).
\end{gather}
This is in fact a direct corollary of Proposition  \ref{pro:variationalPRK}.
Since $e^{sD}$ is a linear group, then for any $(q_0, q_1)$ and any
$ \{\dot{Q}_j\}_{j=1}^s$, we have a corresponding sequence of data $\{\dot{\bar{Q}}_j\}_{j=1}^s:= \{e^{sD}\dot{Q}_j\}_{j=1}^s$ for $(e^{sD}q_0, e^{sD}q_1)$.  Correspondingly the new $Q_i$ (denoted by $\bar{Q}_i$) is given by
\[
\bar{Q}_i=e^{sD}q_0+h\sum_{j=1}^s a_{ij}e^{sD}\dot{Q}_j
=e^{sD}Q_i,
\]
by the linearity of $e^{sD}: \mathbb{R}^d\to \mathbb{R}^d$. Since for the continuous Lagrangian one has
$L(e^{sD}Q_i, e^{sD}\dot{Q}_i)=L(Q_i, \dot{Q}_i)$,
we find the sums to extremize in \eqref{eq:discreteLg} have the same value at the corresponding data. Hence, their extrema must be the same and thus \eqref{eq:discreteaux1} follows. 
Applying the discrete Noether's theorem (Lemma \ref{lmm:discretenoether}), we obtain the desired result.
\end{proof}

\section{Energy and quadratic invariants preserving methods}
\label{sec:EQIP}

In this section, we propose a method that preserve the energy and quadratic invariants
for Hamiltonian system with constraints based on the symplectic PRK schemes, following the spirit of \cite{BrugnanoEnergy,WangParametric}.
Recall that the following conditions guarantee that the $s$-stage PRK method is symplectic and preserves the quadratic invariants:
\begin{gather}\label{eq:conditiongroup1}
\begin{split}
&\text{The symplecticity and quadratic invariants:~}  b_i=\hat{b}_i,  b_i \hat{a}_{ij}+\hat{b}_ja_{ji}=b_i \hat{b}_j, i, j=1,...,s.\\
&\text{For the constraints:~} a_{1j}=0,~a_{sj}=b_j, \text{or equivalently,~} \hat{a}_{is}=0,~~\hat{a}_{i1}=b_1,~i=1,\ldots, s.
\end{split}
\end{gather}

These schemes do not in general preserve the Hamiltonian energy. In fact, such methods conserve the symplectic properties and Hamiltonian energy for constant stepsize $h>0$ has been proven not to exist in general \cite{ChartierAn, GeLie}.
 The key observation now is that the conditions imposed above for preserving the symplecticity and quadratic invariants and constraints manifolds cannot determine $\left( c, A, b; \hat{A}, \hat{b}\right)$ totally, so there is actually some freedom to choose these parameters.

Inspired by \cite{BrugnanoEnergy,WangParametric}, we are motivated to construct $\alpha$-PRK methods 
so that the energy is also preserved. 
However, the specific construction technique developed in \cite{BrugnanoEnergy,WangParametric}  is not feasible here in general.
 In fact, the key in the construction of $\alpha$-PRK methods in the two paper is the so called $W$-transformation, which allows us to express the symplectic condition in a very compact form with the help of matrix $X_{G}$, where the three diagonal anti-symmetric matrix $X_{G}$ is the $W$-transformation matrix of Gauss collocation method. Thus, we can parameterize the transformed matrix so that it remains to satisfy the same conditions as usual symplectic RK or PRK methods, and then obtain the parameterized $\alpha$-PRK method by inverse $W$-transformation. 
For example, the $2$-stage and $3$-stage $\alpha$-PRK methods based on Lobatto IIIA-IIIB pairs are given by \cite{WangParametric}
\begin{equation} \label{eq:2IIIA-IIIB}
\begin{split}
\begin{array}{c|cc}
  0& \frac{1}{6}\alpha & -\frac{1}{6}\alpha\\
  1& \frac{1}{2}-\frac{1}{6}\alpha & \frac{1}{2}+\frac{1}{6}\alpha\\
  \hline
   & \frac{1}{2} & \frac{1}{2}\\
\end{array},\quad\quad~
\begin{array}{c|cc}
 0& \frac{1}{2}-\frac{1}{6}\alpha & \frac{1}{6}\alpha\\
  1& \frac{1}{2}+\frac{1}{6}\alpha & -\frac{1}{6}\alpha\\
  \hline
   & \frac{1}{2} & \frac{1}{2}\\
\end{array}.
\end{split}
 \end{equation}
 
\begin{equation} \label{eq:3IIIA-IIIB}
\begin{split}
\begin{array}{c|ccc}
  0& \frac{1}{5}\alpha & -\frac{2}{5}\alpha &\frac{1}{5}\alpha \\
  \frac{1}{2}& \frac{5}{24}-\frac{1}{15}\alpha &\frac{1}{3}+\frac{2}{15}\alpha &-\frac{1}{24}-\frac{1}{15}\alpha\\
  1& \frac{1}{6}+\frac{1}{5}\alpha & \frac{2}{3}-\frac{2}{5}\alpha &  \frac{1}{6}-\frac{1}{5}\alpha\\
  \hline
  & \frac{1}{6} & \frac{2}{3} &  \frac{1}{6}\\
\end{array},\quad\quad~
\begin{array}{c|ccc}
  0& \frac{1}{6}-\frac{1}{5}\alpha & -\frac{1}{6}+\frac{2}{5}\alpha &-\frac{1}{5}\alpha \\
  \frac{1}{2}& \frac{1}{6}+\frac{1}{15}\alpha &\frac{1}{3}-\frac{2}{15}\alpha &\frac{1}{15}\alpha\\
  1& \frac{1}{6}-\frac{1}{5}\alpha & \frac{5}{6}+\frac{2}{5}\alpha &  -\frac{1}{5}\alpha\\
  \hline
  & \frac{1}{6} & \frac{2}{3} &  \frac{1}{6}\\
\end{array}.
\end{split}
 \end{equation}
We can see that although the method keeps symplecticity and quadratic invariants, it generally does not satisfy the constraint conditions $a_{1j}=0, a_{sj}=b_j$, so its numerical solution cannot be guaranteed to fall on the manifold $\mathcal{M}$.

In this article, we introduce another new parameterization $\alpha$-PRK methods. According to \eqref{eq:conditiongroup1}, we focus only on $(A, b)$. 
Note that the first and last rows of $A$ are fixed and depended on $b$ while the other rows of $A$ can be determined in a great freedom. Since the energy $H(p,q)$
is a scalar, we can let these coefficients change depending on a scalar $\alpha\in \mathbb{R}$ 
in a particular chosen way. Of course, due to the degrees of freedom, such a choice may not be unique. Then, we hope to choose $\alpha$ such that the conditions \eqref{eq:conditiongroup1} are satisfied. Let's first take a closer look at classic examples and then introduce the general higher-order methods.

\subsection{New modified algorithms based on existing methods}

\subsubsection{$\alpha$-Rattle}\label{sec:SR}

In this section, we aim to propose a method based on the Shake-Rattle algorithm in section \ref{subsec:shakerattle}. As known, this method is two stage method with order $2$. If we perturb $b$, the order can be reduced.  Since $s=2$, with the conditions \eqref{eq:conditiongroup1}, there is only one degree of freedom left for choosing the parameters. 
In particular, we let
\begin{gather}\label{eq:bbcoeff}
b_1=1/2+\alpha, ~~b_2=1/2-\alpha,~~\alpha\in \mathbb{R}.
\end{gather}
Then, the matrix $A(\alpha)$-$\widehat{A}(\alpha)$ is fully determined, and the $\alpha$-PRK, which we call $\alpha$-Rattle, in the form Butcher table is given:
\begin{equation} \label{eq:PRKcoeff}
\begin{split}
\begin{array}{c|cc}
  0& 0 & 0\\
  1& \frac{1}{2}+\alpha & \frac{1}{2}-\alpha\\
  \hline
   &\frac{1}{2}+\alpha & \frac{1}{2}-\alpha\\
\end{array},\quad\quad~
\begin{array}{c|cc}
 \frac{1}{2}+\alpha & \frac{1}{2}+\alpha & 0\\
  \frac{1}{2}+\alpha & \frac{1}{2}+\alpha & 0\\
  \hline
   &\frac{1}{2}+\alpha & \frac{1}{2}-\alpha\\
\end{array}.
\end{split}
 \end{equation}
 We can see that this new $\alpha$-PRK method is different from the previous method given in \eqref{eq:2IIIA-IIIB}. For fixed $\alpha$, it not only can preserve the symplecticity and quadratic invariants, but also can guarantee the numerical solutions to fall on the manifold $\mathcal{M}$. When we adjust $\alpha$ each step, the method can preserve energy and quadratic invariants, and also to guarantee the numerical solutions to fall on the manifold $\mathcal{M}$. 
 
 We write out the details of the $\alpha$-Rattle method below for convenience:
\begin{equation} \label{eq:coeffPQ}
\begin{split}
\begin{cases}
&P_1=p_0+h(1/2+\alpha)\left(-H_{q}(P_1, Q_1)-G^{T}(Q_1)\Lambda_1  \right), \\
&P_2=P_1,\\
&Q_1=q_0,\\
&Q_2=q_0+h \left((\frac{1}{2}+\alpha)H_p(P_1, Q_1)+(\frac{1}{2}-\alpha)H_p(P_2, Q_2) \right),\\
&g(Q_1)=g(Q_2)=0,\\
\end{cases}
\end{split}
 \end{equation}
The numerical solution $(p_1, q_1)$ is then solved by
 \begin{equation} \label{eq:solvep2}
\begin{cases}
q_1 &=Q_2,\\
\ell_1&=-H_{q}(P_1, Q_1)-G^{T}(Q_1)\Lambda_1,\\
\ell_2&=-H_{q}(P_2, Q_2)-G^{T}(Q_2)\Lambda_2,\\
p_1&=p_0+h\left((\frac{1}{2}+\alpha)\ell_1+(\frac{1}{2}-\alpha)\ell_2 \right),\\
0&=G(q_1)H_{p}(p_1, q_1).
\end{cases}
 \end{equation}
The system \eqref{eq:coeffPQ} can be reduced to that 
\begin{equation} \label{eq:solvep2}
\begin{split}
\begin{cases}
&P=p_0-h(1/2+\alpha) H_{q}(P, q_0)-hG^{T}(q_0)\lambda_1, \\
&q_1=q_0+h \left((\frac{1}{2}+\alpha)H_p(P, q_0)+(\frac{1}{2}-\alpha)H_p(P, q_1) \right),\\
&0=g(q_1).\\
\end{cases}
\end{split}
 \end{equation}
where $\lambda_1:=(1/2+\alpha)\Lambda_1$.
There are three unknowns in this system.
Then, one can solve $(p_1, \lambda_2)$ by
 \begin{equation} \label{eq:solvep3}
\begin{cases}
p_1&=p_0-h(\frac{1}{2}+\alpha)H_{q}(P, q_0)-h(\frac{1}{2}-\alpha)H_{q}(P, q_1)
-hG^{T}(q_0)\lambda_1-hG^{T}(q_1)\lambda_2 ,\\
0&=G(q_1)H_{p}(p_1, q_1),
\end{cases}
 \end{equation}
where $\lambda_2:=(1/2-\alpha)\Lambda_2$.

\subsubsection{$\alpha$-PRK III methods}\label{sec:PRKIII}

We now construct the $\alpha$-PRK III methods from Lobatto IIIA or IIIB methods. For unconstrained Hamiltonian systems, this method has been used by Sun \cite{Sun1995} to construct symplectic PRK method. However, as we pointed out earlier, this method generally does not conserve the Hamiltonian energy. 

The general Runge-Kutta method with parameters $(c, A, b)$ for non-autonomous equations $\dot{y}=f(t,  y)$ is given by 
\[
y_1=y_0+h\sum_{j=1}^s b_j f(t_0+c_j h, k_j),~~~~
k_i=y_0+h\sum_{j=1}^s a_{ij}f(t_0+c_j h, k_j),~i=1,\ldots, s.
\]
The classical $B(p), C(\eta)$ and $D(\zeta)$ conditions for the RK method with parameters $(c, A, b)$ are given by
\begin{gather}
\begin{split}
B(p):&\quad \sum_{i=1}^s b_i c_i^{q-1}=\frac{1}{q}\quad q=1,\ldots, p;\\
C(\eta): &\quad \sum_{j=1}^s a_{ij}c_j^{q-1}=\frac{c_i^q}{q}
\quad i=1,\ldots, s,~q=1,\ldots, \eta;\\
D(\zeta): & \quad \sum_{i=1}^s b_i c_i^{q-1}a_{ij}=\frac{b_j}{q}(1-c_j^q)
\quad i=1,\ldots, s,~q=1,\ldots, \zeta.
\end{split}
\end{gather}
See \cite[IV.5, pp71]{Hairer2006} for more details on the $B(p),C(\eta)$ and $D(\zeta)$ assumptions for RK methods.

\begin{theorem}\label{thm:sun1995}
\cite{Sun1995}
Suppose that an $s$-stage RK method $(c,A,b)$
with distinct abscissae $c_{i}$ and $b_{i}\neq0$ satisfies
$B(p),C(\eta)$ and $D(\zeta)$. Then the PRK method generated by
$\left(c_{i},a_{ij},\hat{a}_{ij}=b_{j}(1-\frac{a_{ji}}{b_{i}}),
b_{i}\right)$ is symplectic and of order $q=\mathrm{min} (p,
2\eta+2, 2\zeta+2, \eta+\zeta+1)$ for Hamiltonian systems.
\end{theorem}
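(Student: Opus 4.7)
The plan is to verify the two parts of Theorem~\ref{thm:sun1995} separately: symplecticity of the PRK scheme generated by $(c_i, a_{ij}, \hat{a}_{ij} = b_j(1 - a_{ji}/b_i), b_i)$, and its order on Hamiltonian systems.

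Symplecticity is an immediate algebraic check against Lemma~\ref{lem:symcondition}. The identity $\hat{b}_i = b_i$ holds by construction, and substituting the defining relation into the remaining condition gives
\[
b_i \hat{a}_{ij} + b_j a_{ji} = b_i b_j\Bigl(1 - \frac{a_{ji}}{b_i}\Bigr) + b_j a_{ji} = b_i b_j - b_j a_{ji} + b_j a_{ji} = b_i b_j,
\]
which is the required symplecticity relation.

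For the order, the first step is to show that the partner tableau $(\hat{A}, b, c)$ inherits simplifying assumptions from $(A, b, c)$. Direct calculation gives
\[
\sum_j \hat{a}_{ij}\, c_j^{r-1} = \sum_j b_j c_j^{r-1} - \frac{1}{b_i}\sum_j b_j a_{ji}\, c_j^{r-1}.
\]
Applying $B(p)$ to the first sum and $D(\zeta)$ (after an appropriate relabeling of indices) to the second yields $c_i^{r}/r$ for $r \le \min(p,\zeta)$, i.e.\ $\widehat{C}(\min(p,\zeta))$ holds. A symmetric calculation on $\sum_i b_i c_i^{r-1}\hat{a}_{ij}$, using $B(p)$ and $C(\eta)$, gives $\widehat{D}(\min(p,\eta))$.

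The main obstacle is translating these simplifying assumptions into the order bound for the PRK method, because the order conditions for a PRK scheme are indexed by bi-colored rooted trees, with more conditions per order than in the monochrome RK case. The key fact to exploit is that on a Hamiltonian system the symplectic pairing forces pairs of bi-colored order conditions obtained by color-swap/transposition to coincide, effectively collapsing the analysis back onto monochrome trees. One then invokes Butcher's classical order theorem: the combination of $B(p), C(\eta), D(\zeta)$ implies order at least $\min(p, 2\eta+2, 2\zeta+2, \eta+\zeta+1)$ for a single RK tableau. Because the partner assumptions $\widehat{C}(\min(p,\zeta))$ and $\widehat{D}(\min(p,\eta))$ produce the same quantity under the symmetry $\eta \leftrightarrow \zeta$ in Butcher's bound, the partner imposes no additional restriction on the final minimum, and the stated order $q$ follows.
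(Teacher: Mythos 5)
The paper does not actually prove this statement; it is quoted verbatim from \cite{Sun1995} and used as a black box, so there is no internal proof to compare against. Judged on its own merits, your proposal gets the easy half right and correctly sets up the hard half, but the decisive step is asserted rather than proved. The symplecticity check is complete and correct. The computations showing that the partner tableau satisfies $\widehat{C}(\min(p,\zeta))$ and $\widehat{D}(\min(p,\eta))$ are also correct (indeed $\sum_j \hat{a}_{ij}c_j^{r-1}=\tfrac1r-\tfrac{1}{b_i}\cdot\tfrac{b_i}{r}(1-c_i^r)=\tfrac{c_i^r}{r}$ for $r\le\min(p,\zeta)$, and symmetrically for $\widehat{D}$), and these are genuinely the right first steps toward the order statement.

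The gap is in the final paragraph. The order conditions of a PRK method are indexed by bi-colored rooted trees, and your claim that symplecticity makes them ``collapse back onto monochrome trees,'' after which Butcher's single-tableau theorem can be invoked, is not correct as stated and is not justified. What the symplecticity relations $b_i\hat{a}_{ij}+b_ja_{ji}=b_ib_j$ actually buy is that order conditions for bi-colored trees related by \emph{moving the root} become equivalent, so only one condition per equivalence class (per ``free'' bi-colored tree) must be checked; the trees remain bi-colored. One must then run an induction on these free bi-colored trees, using $C(\eta)$, $D(\zeta)$, $\widehat{C}(\min(p,\zeta))$, $\widehat{D}(\min(p,\eta))$ and a judicious choice of root in each class to prune branches, and track how the four exponents $p$, $2\eta+2$, $2\zeta+2$, $\eta+\zeta+1$ arise as the obstructions to pruning. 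This reduction is the entire content of Sun's theorem and is not a corollary of the monochrome Butcher bound; note also that the monochrome bound with parameters $(p,\min(p,\zeta),\min(p,\eta))$ for the partner tableau would not by itself yield the stated $q$ without the root-moving equivalences. As written, your argument would not let a reader reconstruct why, say, $\eta+\zeta+1$ is the correct mixed term. To close the proof you need either to carry out the bi-colored tree induction explicitly or to cite the precise P-series order theorem for symplectic PRK pairs that packages it.
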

As a remark, the equations considered in this work are autonomous and the parameters $c$ do not appear in the scheme. However, the conditions $B(p), C(\eta), D(\zeta)$ still make sense: one may find the parameters $c$ from $C(\eta)$ condition and then require $B(p), D(\zeta)$ to hold.

Here we construct several specific parameterized $\alpha$-PRK III methods for Hamiltonian systems with holonomic constraints
following the idea described in Algorithm \ref{alg:equip}.

One method to parameterize the Lobatto IIIA or IIIB methods is to keep $b$ to be constant at first but to change $a_{i2}$ in some suitable ways. 
This leads to that 
 $3$-stage parameterized $\alpha$-Lobatto IIIA-III$\widehat{A}$  
\begin{equation} \label{eq:2alpha3IIIA-IIIA}
\begin{split}
\begin{array}{c|ccc}
  0& 0 & 0 & 0 \\
  \frac{1}{2}& \frac{5}{24}-\alpha&\frac{1}{3}-\alpha &2\alpha-\frac{1}{24}\\
  1& \frac{1}{6} & \frac{2}{3}&  \frac{1}{6}\\
  \hline
  & \frac{1}{6}& \frac{2}{3} &  \frac{1}{6}\\
\end{array},\quad\quad~
\begin{array}{c|ccc}
  0 & \frac{1}{6} & 4\alpha-\frac{1}{6} & 0 \\
  \frac{1}{2} & \frac{1}{6} &\frac{1}{3}+\frac{2}{3}\alpha &0\\
  1 & \frac{1}{6} & \frac{5}{6}-8\alpha & 0\\
  \hline
  & \frac{1}{6} & \frac{2}{3} &  \frac{1}{6}\\
\end{array}.
\end{split}
 \end{equation}
Or the $3$-stage parameterized $\alpha$-Lobatto III$\widehat{B}$-IIIB 
\begin{equation} \label{eq:2alpha3IIIB-IIIB}
\begin{split}
\begin{array}{c|ccc}
  0& 0 & 0 & 0 \\
  \frac{1}{2}& \frac{5}{24}-\frac{\alpha}{2}&\frac{1}{3}+\alpha &-\frac{1}{24}-\frac{\alpha}{2}\\
  1& \frac{1}{6} & \frac{2}{3}&  \frac{1}{6}\\
  \hline
  & \frac{1}{6}& \frac{2}{3} &  \frac{1}{6}\\
\end{array},\quad\quad~
\begin{array}{c|ccc}
 0 & \frac{1}{6} & 2\alpha-\frac{1}{6} & 0 \\
  \frac{1}{2} & \frac{1}{6} &\frac{1}{3}-\alpha &0\\
  1 & \frac{1}{6} & \frac{5}{6}+2\alpha & 0\\
  \hline
  & \frac{1}{6} & \frac{2}{3} &  \frac{1}{6}\\
\end{array}.
\end{split}
 \end{equation}

One can check that all the \tcb{$\alpha$-PRK III} methods used in \eqref{eq:2alpha3IIIA-IIIA}-\eqref{eq:2alpha3IIIB-IIIB} satisfy that $B(2)$ and $C(1)$. Hence, those four $\alpha$-PRK methods have second order accuracy according to Theorem \ref{thm:sun1995}.
We can construct general $s$-stage $\alpha$-PRK methods based on IIIA-IIIB methods in the same way. Note that those methods are different from the previous $\alpha$-PRK  based on $W$-transformation method given in \eqref{eq:3IIIA-IIIB}. When $\alpha=0$, all the $\alpha$-PRK methods presented in \eqref{eq:3IIIA-IIIB}, \eqref{eq:2alpha3IIIA-IIIA}-\eqref{eq:2alpha3IIIB-IIIB} are reduced to the classical Lobatto IIIA-IIIB pairs. 

\subsection{The general method and some theoretical results}
As we can see from the above example, we can first parameterize the coefficients $b(\alpha),A(\alpha)$, and then determine the entire $\alpha$-PRK method according to the conditions \eqref{eq:conditiongroup1}. 

\subsubsection{The general method and properites}

With the discussion above, we propose the method as follows:
\begin{algorithm}[H]
\caption{(Energy and quadratic invariant preserving method)}\label{alg:equip}
\begin{algorithmic}[1]
\State Choose a particular form of functions $b=b(\alpha)$, and $A=A(\alpha)$
such that there exists an interval $I\subset \mathbb{R}$ satisfying
\begin{itemize}
\item For all $1\le i\le s$, $b_i(\alpha)\neq 0$  for all most all $\alpha\in I$; $\sum_i b_i(\alpha)=1$.
\item $a_{1j}(\alpha)=0$, $a_{sj}(\alpha)=b_j(\alpha)$ for all $1\le j\le s$ and all $\alpha\in I$.
\item Determine $\widehat{b}$ and $\widehat{A}$ by relation \eqref{eq:conditiongroup1}.
\end{itemize}
\State Pick computational time $T>0$; time step $h>0$. Let $N=\lceil T/h \rceil$.
Pick initial data $(p_0, q_0)$.
\For {$n \text{ in } 1: N$}
\State  Let $(p_n(\alpha), q_n(\alpha))$ be given by \eqref{eq:PRK} with coefficients above and initial data $(p_{n-1}, q_{n-1})$.
\State Choose $\alpha^*\in I$ such that $H(p_1(\alpha^*), q_1(\alpha^*))=H(p_0, q_0)$.
\State Let $(p_n, q_n)=(p_n(\alpha^*), q_n(\alpha^*))$.
\EndFor
\end{algorithmic}
\end{algorithm}

From the algorithm, we can see that in order to preserve the constrained manifold $\cM$ we can impose the conditions $a_{1j}=0,~a_{sj}=b_j,$ or equivalently, $\hat{a}_{is}=0,~~\hat{a}_{i1}=b_1,~i=1,\ldots, s,$ and then to determine the symplectic PRK methods form the relation \eqref{eq:conditiongroup1}. 

For the general $s$-stage ($s\geq 3$) RK methods, due to the increase of degrees of freedom, the parameterization methods is not unique, and there may be many other methods. We now explore the properties of the new methods.

\begin{theorem}\label{thm:conservationproperty}
If the method in Algorithm \ref{alg:equip} has a solution, then the energy $H(p_n, q_n)$ is a constant. Moreover, for all  $D\in \mathscr{D}$, 
the quadratic invariant $q_n^TD p_n$ is also a constant.
\end{theorem}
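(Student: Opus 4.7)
The proof splits cleanly into two parts, and both are consequences of facts already established.

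For the energy part, there is essentially nothing to do beyond invoking the construction itself. Line 5 of Algorithm \ref{alg:equip} defines $\alpha^*$ (assumed to exist, by hypothesis) by the equation $H(p_n(\alpha^*),q_n(\alpha^*)) = H(p_{n-1},q_{n-1})$, and line 6 sets $(p_n,q_n)$ to be that one-step image. So $H(p_n,q_n)=H(p_{n-1},q_{n-1})$ for every $n\geq 1$, and a trivial induction on $n$ yields $H(p_n,q_n)=H(p_0,q_0)$.

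For the quadratic invariant part, the plan is to reduce one step of the $\alpha$-PRK method to an application of Theorem \ref{thm:quadraticinvar}. The point is that, even though $\alpha^*$ is allowed to depend on the step index $n$, for each \emph{fixed} $n$ the coefficients $(b(\alpha_n^*),A(\alpha_n^*),\widehat b(\alpha_n^*),\widehat A(\alpha_n^*))$ form an $s$-stage PRK tableau which, by the construction in Algorithm \ref{alg:equip}, satisfies both the symplecticity conditions \eqref{eq:prksymplecticcond} and the constraint-accuracy conditions \eqref{eq:prkstiffaccurate}. Thus that single step is honestly an instance of the symplectic PRK scheme to which Theorem \ref{thm:quadraticinvar} applies, so $q_n^T D p_n = q_{n-1}^T D p_{n-1}$ for every $D\in \mathscr D$. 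A second induction on $n$ then gives $q_n^T D p_n = q_0^T D p_0$ for all $n$, which is the claim.

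The main conceptual point (and the only place one could plausibly go wrong) is precisely this step-by-step reduction: the invariant-preservation result in Theorem \ref{thm:quadraticinvar} is stated for a PRK method with one fixed tableau, whereas here the tableau changes from step to step. The resolution is that Theorem \ref{thm:quadraticinvar} only uses the tableau to propagate from $(p_{n-1},q_{n-1})$ to $(p_n,q_n)$, so the fact that a different $\alpha^*_{n+1}$ is chosen in the next step does not affect the previous one; the chain of single-step equalities still telescopes. One should, however, make sure that the chosen $\alpha_n^*\in I$ at every step, so that the algebraic conditions $b_i(\alpha_n^*)\neq 0$, $a_{1j}(\alpha_n^*)=0$, $a_{sj}(\alpha_n^*)=b_j(\alpha_n^*)$, and the induced $\widehat A(\alpha_n^*),\widehat b(\alpha_n^*)$ relations of \eqref{eq:conditiongroup1} actually hold — this is built into the setup of Algorithm \ref{alg:equip} but deserves a single sentence of acknowledgement in the written proof.
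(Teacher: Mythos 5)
Your proposal is correct and follows essentially the same route as the paper: the energy conservation is immediate from the defining choice of $\alpha^*$ in Algorithm \ref{alg:equip}, and the quadratic invariants are handled by applying Theorem \ref{thm:quadraticinvar} to each single step with its own fixed tableau and telescoping. The paper's own proof is just a terser version of this argument; your explicit remark that the tableau may change from step to step without harming the chain of one-step equalities is the right point to emphasize.
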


\begin{proof}
The energy constant is obvious by the method. For the quadratic invariant, we note that Theorem \ref{thm:quadraticinvar} tells us that for any $\alpha$, $q_n^T(\alpha)Dp_n(\alpha)=q_0Dp_0$.
Hence, the claims follow. We skip the details.
\end{proof}

Here, as that for symplectic RK methods, let us point out the difference between the conservation of quadratic invariants and symplecticity.
As we have seen, for PRK methods with fixed coefficients, \eqref{eq:conditiongroup1} guarantees the symplecticity and conservation of quadratic invariants. In fact, for Hamiltonian system, conservation of quadratic invariants and symplecticity are equivalent. Now, if we apply the method for all initial data $(p_0, q_0)$ (all trajectories), the method given by Theorem \ref{thm:conservationproperty} may not be symplectic although \eqref{eq:conditiongroup1} is satisfied. The reason is that the coefficients in the method depend on the initial data now, and thus $dp_n\wedge dq_n$ may not be equal to $dp_0 \wedge dq_0$.
Compared to the fact that symplectic involves the differential of numerical solutions, the quadratic invariants depend only on the values of the numerical solutions. The conservation of quadratic invariants still holds by relations \eqref{eq:conditiongroup1}.
Based on the discussion above, our method can be applied in two possible cases.
\begin{enumerate}
\item We apply the method for all trajectories so that the energy and quadratic invariants are preserved 
for all trajectories. However, the method may not be symplectic though.

\item We fix down the coefficients obtained by a particular trajectory. Then, the method is symplectic, preserves the quadratic invariants, and preserves the energy for the chosen trajectory.
\end{enumerate}

\subsubsection{Existence of the numerical solutions and order of the methods}
Clearly, the method in Algorithm \ref{alg:equip} depends crucially on the solvability of 
\begin{gather}\label{eq:energyrequirement}
\mu(\alpha, h):= H(p_1(\alpha, h), q_1(\alpha, h))-H(p_0, q_0)=0,
\end{gather}
for small enough $h>0$ and some given consistent initial values $(p_0, q_0)\in \mathcal{Q}\times \mathcal{Q}$.
Clearly, $\mu(\alpha, h)$ is the error measuring the error for energy. As $h\to 0^+$, $\mu(\alpha, h)\to 0$ for all $\alpha$.
We assume the order of the energy satisfies the follows. 
\begin{assumption}\label{ass:methodconditions}
We assume the method for $\alpha=0$ is a special base method that satisfies the following.
\begin{enumerate}
\item Suppose $\mu(\alpha, h)$ is analytic in a neighborhood of $(0, 0)$. 
\item The order of energy approximation is higher for $\alpha=0$ than other $\alpha$ values near $0$. In other words, there exist $r, r_1$ with $r\ge r_1> 1$ such that
\begin{gather}\label{eq:asslimitcond}
\lim_{h\to 0^+}\frac{1}{h^r}\mu(0, h)\neq 0,\quad~\lim_{h\to 0^+}\frac{1}{h^{r_1}}\mu(\alpha, h)\neq 0.
\end{gather}
\end{enumerate}
\end{assumption}

\begin{remark}
To understand the second term in Assumption \ref{ass:methodconditions}, 
we note that
\[
\mu(\alpha, h)=H(p_1(\alpha, h), q_1(\alpha, h))
-H(p(h), q(h))
\]
where $(p(t), q(t))$ is the exact solution of the Hamiltonian system with initial data $(p_0, q_0)$. Hence, $\mu$ is in fact the numerical error of the energy for the scheme. The second iterm in Assumption \ref{ass:methodconditions} is the convergence order of the energy, which is often $O(h^{\beta+1})$ if the method has a convergence order $\beta$. 
\end{remark}

By the analyticity, one has in a neighborhood of $(0, 0)$ that
\[
\mu(\alpha, h)=\sum_{\ell\ge 0, m> 1}c_{\ell, m}\alpha^{\ell} h^m
\]
Since $\mu$ cannot be identically zero, there must some $\ell, m$ such that $c_{\ell,m}\neq 0$.
Hence, there always exists some $r(\alpha)>0$ such that
$\lim_{h\to 0^+}\mu(\alpha, h)/h^{r(\alpha)}$ exists and is nonzero. Hence, the second item in the assumption roughly says we have
\begin{gather}\label{eq:energydiffaux}
\mu(\alpha, h)=\left[ c_{0,r}h^{r}+\sum_{m>r}c_{0,m}h^m \right]+c_{kr_1}\alpha^k h^{r_1}+\cdots,~~\quad k>0, r\geq r_1>1.
\end{gather}

We further assume that the dependence in $\alpha$ is not degenerate, and without los of generalization in \eqref{eq:energydiffaux} for $k=1$.   
If $k\neq 1$, we may set $\tilde{\alpha}=\alpha^k$. Of course, in terms of $(\tilde{\alpha}, h)$, the analyticity of $\mu$ may break down. However, as soon as $\mu$ is $C^1$ in $\tilde{\alpha}$ and $h$, the claim below still holds. Provided $k=1$, we can prove that he method can give a sequence of numerical solutions, as below.
\begin{theorem}\label{thm:existence}
Suppose Assumption \ref{ass:methodconditions} holds and $r, r_1$ are the numbers as in Assumption \ref{ass:methodconditions}. If it holds that 
\begin{gather}\label{eq:solvecond}
\lim_{h\to 0^+}\frac{1}{h^{r_1}}\frac{\partial \mu}{\partial \alpha}(0,0)\neq 0,
\end{gather}
then the equation \eqref{eq:energyrequirement} has a solution $\alpha^*$ for $h$ sufficiently small, and this solution can be written as 
\begin{gather}
\alpha^*(h)=\eta(h)h^{r-r_1}
\end{gather}
for some smooth function $\eta(\cdot)$, with $\lim_{h\to 0^+}\eta(h)\neq 0$. 
\end{theorem}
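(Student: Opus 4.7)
The plan is to rescale $\alpha$ so as to balance the two dominant contributions in $\mu$, and then to apply the implicit function theorem to the rescaled problem.

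First, I would use the analyticity of $\mu$ near $(0,0)$ together with Assumption~\ref{ass:methodconditions} to write
\[
\mu(\alpha, h) \;=\; c_{0,r}\,h^r \;+\; c_{1,r_1}\,\alpha\,h^{r_1} \;+\; R(\alpha, h),
\]
where $c_{0,r}\neq 0$ by the first part of \eqref{eq:asslimitcond} and $c_{1,r_1}\neq 0$ by the hypothesis \eqref{eq:solvecond} (the consistent reading is $h^{-r_1}\partial_\alpha\mu(0,h)\to c_{1,r_1}$ as $h\to 0^+$, since $\partial_\alpha\mu(0,0)=0$). The structure of Assumption~\ref{ass:methodconditions} and the ``$k=1$'' reduction force $c_{0,m}=0$ for $m<r$ and $c_{\ell,m}=0$ for $\ell\ge 1, m<r_1$, so every monomial $c_{\ell,m}\alpha^\ell h^m$ in the remainder $R$ has indices in one of the ranges $\ell=0, m>r$ or $\ell\ge 1, m\ge r_1$ with $(\ell,m)\neq (1,r_1)$.

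Next I would apply the scaling $\alpha=\beta\, h^{r-r_1}$, uniquely chosen so that the two leading terms sit at the same order in $h$. Substituting and dividing by $h^r$ defines
\[
G(\beta, h) \;:=\; \frac{\mu(\beta h^{r-r_1},\, h)}{h^r}
\;=\; c_{0,r}+c_{1,r_1}\beta \;+\; \sum_{(\ell,m)} c_{\ell,m}\,\beta^{\ell}\, h^{\,\ell(r-r_1)+m-r}.
\]
The essential bookkeeping is to verify that every exponent $\ell(r-r_1)+m-r$ arising from the remainder is a strictly positive integer. The tightest case is $\ell=2,\, m=r_1$, which yields exponent $r-r_1>0$; the strict inequality is exactly the ``genuinely higher order for $\alpha=0$'' clause of Assumption~\ref{ass:methodconditions}. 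All other allowed $(\ell,m)$ give strictly larger exponents. Consequently $G$ extends to an analytic function in a neighborhood of $(\beta^{*},0)$ with boundary value $G(\beta,0)=c_{0,r}+c_{1,r_1}\beta$.

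Finally I would apply the implicit function theorem at $\beta^{*}:=-c_{0,r}/c_{1,r_1}$, which is well-defined and nonzero. Since $G(\beta^{*},0)=0$ and $\partial_\beta G(\beta^{*},0)=c_{1,r_1}\neq 0$, the IFT produces a smooth $\eta(h)$ with $\eta(0)=\beta^{*}$ and $G(\eta(h),h)=0$ for small $h$. Unwinding the scaling yields $\alpha^{*}(h)=\eta(h)\, h^{r-r_1}$, which solves $\mu(\alpha^{*}(h),h)=0$ and satisfies $\lim_{h\to 0^+}\eta(h)=\beta^{*}\neq 0$. The main obstacle, and the only nontrivial step, is the exponent-counting that makes $G$ smooth at $h=0$; once this is checked, the remainder of the proof is a direct implicit function theorem argument.
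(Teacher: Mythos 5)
Your proposal is correct and follows essentially the same route as the paper: rescale via $\alpha=\eta\,h^{r-r_1}$, divide by $h^{r}$, and apply the implicit function theorem to the rescaled function. You are in fact slightly more careful than the paper's own argument, since you explicitly locate the IFT base point at $\beta^{*}=-c_{0,r}/c_{1,r_1}$ (where the rescaled function actually vanishes at $h=0$) and carry out the exponent bookkeeping that justifies smoothness of $G$ at $h=0$, both of which the paper leaves implicit.
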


\begin{proof}
The proof can be done by the implicit function theorem, following similar approach as in \cite{BrugnanoEnergy}.
Define a new variable
$\eta=h^{-(r-r_1)}\alpha.$
Then,
\[
\tilde{\mu}(\eta, h)=h^{-r}\mu(h^{(r-r_1)}\eta, h).
\]
By the assumptions, $\tilde{\mu}$ is analytic in a neighborhood of $(0, 0)$.
Moreover,
\[
\frac{\partial \tilde{\mu}}{\partial \eta}(0,0)\neq 0,
\]
by \eqref{eq:solvecond}.
Clearly, $\tilde{\mu}=0$
implies $\mu=0$. By implicit function theorem,
$\tilde{\mu}=0$ implicitly defines a smooth function
$\eta=\eta(h)$
for $h\in J$, where $J$ is a small interval containing $0$.
By Assumption \ref{ass:methodconditions}, it is not hard to see (depending on $c_{0,r}$ and $c_{1,r_1}$ in \eqref{eq:energydiffaux})
$\eta(0)\neq 0.$
The claims then follow if we define $\alpha^*=h^{r-r_1}\eta$.
\end{proof}

With the relation $\alpha^*=\eta(h) h^{r-r_1}$, the convergence order of the method given in Algorithm \ref{alg:equip} is in fact can be the same as that for $\alpha=0$. This means that the order of convergence can be preserved. We in fact have the following claim.
\begin{corollary}\label{cor:order}
Suppose that for all $\alpha\neq 0$ near $0$, the order of PRK is $r_1-1$ and that the order of PRK scheme for $\alpha=0$ is $r-1$. If the conditions in Theorem \ref{thm:existence} hold, then the order of convergence for the method given in Algorithm \ref{alg:equip} is equal to $r-1$.
\end{corollary}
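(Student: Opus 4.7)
The plan is to use the precise rate $\alpha^*(h) = \eta(h)\, h^{r-r_1}$ supplied by Theorem \ref{thm:existence} together with a joint Taylor expansion of the one-step local truncation error in the pair $(\alpha, h)$; this will show that the local error of Algorithm \ref{alg:equip} is $O(h^r)$, and a standard stability argument will then lift it to a global error of order $r-1$.

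Concretely, I would let $E(\alpha, h) := (p_1(\alpha, h), q_1(\alpha, h)) - (p(h), q(h))$ denote the one-step local error starting from a consistent datum $(p_0, q_0)$, and use the smooth dependence of the PRK scheme on its coefficients to write, in a neighborhood of the origin,
\[
E(\alpha, h) = \sum_{j, k \geq 0} c_{j, k}\, \alpha^{j}\, h^{k}.
\]
The hypothesis that PRK has order $r-1$ at $\alpha = 0$ gives $c_{0, k} = 0$ for $k < r$; the hypothesis that PRK has order $r_1-1$ for every small $\alpha \neq 0$ gives, after writing $E(\alpha, h) = \sum_k \beta_k(\alpha)\, h^k$ with each $\beta_k$ analytic in $\alpha$, that $\beta_k \equiv 0$ for $k < r_1$, i.e.\ $c_{j, k} = 0$ for every $j \geq 0$ and every $k < r_1$. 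Substituting $\alpha^*(h) = \eta(h)\, h^{r-r_1}$ then sends any surviving term $c_{0, k}\, h^{k}$ with $k \geq r$ to $O(h^r)$ and any surviving term $c_{j, k}\, \alpha^{j}\, h^{k}$ with $j \geq 1$, $k \geq r_1$ to $h^{j(r-r_1)+k} = O(h^r)$, with analyticity controlling the tail for small $h$. Therefore the local truncation error of the energy-preserving scheme is $O(h^r)$, matching the base PRK scheme at $\alpha = 0$.

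To pass from local to global, I would observe that the map $(p_0, q_0) \mapsto \alpha^*(h; p_0, q_0)$ is smooth (implicit function theorem, exactly as in the proof of Theorem \ref{thm:existence}) and is $O(h^{r-r_1})$ uniformly on compact sets of consistent data, so that the one-step map of Algorithm \ref{alg:equip} is a smooth $O(h^{r-r_1})$ perturbation of the convergent, stable base scheme. A discrete Gronwall / Lady Windermere's fan argument then converts the local error $O(h^r)$ into a global error $O(h^{r-1})$ on any fixed time interval. I expect this last step to be the main obstacle: one must verify uniformly along the whole trajectory that $\alpha_n^*$ continues to exist and remains small at each step, and that this data-dependent perturbation does not spoil the stability of the base PRK scheme. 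Both facts should follow from the smoothness of the implicit construction and the smallness of $\alpha_n^* = O(h^{r-r_1})$, but the uniform bounds need to be assembled carefully; by comparison, the Taylor-expansion portion is essentially bookkeeping.
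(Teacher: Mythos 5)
Your proposal is correct and follows essentially the same route as the paper: the paper writes the local error as $\eta_i(\alpha,h)h^{r_1}$ with $\eta_i$ smooth and bounded, uses $|\eta_i(\alpha,h)-\eta_i(0,h)|\le C|\alpha|$ to get $\|p_1(\alpha^*,h)-p_1(0,h)\|\le Ch^{r_1}\cdot h^{r-r_1}=Ch^r$, and then invokes the order of the $\alpha=0$ scheme — which is exactly your coefficient bookkeeping $c_{j,k}=0$ for $k<r_1$ and $c_{0,k}=0$ for $k<r$ in factored form. Your treatment of the local-to-global passage (uniform existence and smallness of $\alpha_n^*$ along the trajectory, Lady Windermere's fan) is more explicit than the paper's, which leaves that step implicit, but the underlying argument is the same.
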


\begin{proof}
By the assumptions,
\[
\|p_1(\alpha, h)-p(h)\|=\eta_1(\alpha, h)h^{r_1},
~~\|q_1(\alpha, h)-q(h)\|=\eta_2(\alpha, h)h^{r_1},
\]
where $(p(h), q(h))$ are the exact solutions for the Hamiltonian system with initial data $(p_0, q_0)$. Here, $\eta_1(\alpha, h)$
and $\eta_2(\alpha, h)$ are bounded smooth functions. 
Then, it is easy to see $|\eta_i(\alpha, h)-\eta_i(0, h)|\le C |\alpha|$ for $i=1,2$. Hence, with $\alpha=\alpha^*$
\[
\|p_1(\alpha, h)-p_1(0,h)\|\le Ch^{r_1}h^{r-r_1}=Ch^r.
\]
The same is true for $q_1$. 
By the assumption on the accuracy for $\alpha=0$, the claim follows.
\end{proof}

As a direct application of the Theorem \ref{thm:existence} and Corrollary \ref{cor:order}, we in the next try to find more explicit conditions corresponding to the condition in Theorem \ref{thm:existence} to hold (namely
$\lim_{h\to 0^+}h^{-r_1}\partial_{\alpha}\mu \neq 0$) and determine $r_1$ for the modified Shake and Rattle algorithm described in section \ref{sec:SR}. We first of all assume the following.

\begin{assumption}
\label{ass:independentconstraints}
The matrix $G(q)$ is of rank $m$ for all $q$, and $H_{pp}(p,q)$ is invertible for all $(p, q)$.
\end{assumption}

Clearly, the first is reasonable as we assume the $m$ holonomic constraints are independent. 
Below, we perform the discussion with Assumption \ref{ass:independentconstraints}.
Motivated by the condition in Theorem \ref{thm:existence}, we define the operator
\begin{gather}
\cD_sf:=\lim_{h\to 0^+} \Big[ h^{-s}\partial_{\alpha}f(\alpha, h)|_{\alpha=0}\Big].
\end{gather}
Since $\lim_{h\to 0^+}p_1(0, h)=p_0$ and $\lim_{h\to 0^+}q_1(0, h)=q_0$, one has
\begin{gather}
\cD_{s}\mu=(\cD_s p_1)\cdot H_p(p_0, q_0)
+(\cD_s q_1)\cdot H_q(p_0, q_0).
\end{gather}

Now, we aim to find the minimum $r$ suh that $\cD_r\mu \neq 0$.
First consider $s=1$. By \eqref{eq:solvep2} and $P(0,h)\to p_0$, one has
\[
\cD_1 P=-H_q(p_0, q_0)
-\frac{1}{2}(\cD_0 P)\cdot H_{pq}(p_0, q_0)-G^T(q_0)\cdot \cD_0\lambda_1.
\]
Clearly, $\cD_0 P=0$. Hence,
$\cD_1P=-H_q(p_0, q_0)-G^T(q_0)\cdot \cD_0\lambda_1.$
Consequently,
\[
\begin{split}
\cD_1 q_1 &=H_p(p_0, q_0)-H_p(p_0, q_0)+\frac{1}{2}[\cD_0 P \cdot H_{pp}(p_0, q_0)
+\cD_0 P H_{pp}(p_0, q_0)+\cD_0 q_1 H_{qp}(p_0, q_0)]\\
&=0.
\end{split}
\]
Similarly,
\[
\begin{split}
\cD_1 p_1 &=[-H_q(p_0,q_0+H_q(p_0, q_0))]
-G^T(q_0)\cD_0(\lambda_1+\lambda_2)
-\cD_0q_1\cdot \nabla_{qq}g(q_0)\cdot \lim_{h\to 0^+}\lambda_2(0,h)\\
&= -G^T(q_0)\cD_0(\lambda_1+\lambda_2).
\end{split}
\]

By the constraint of $p_1$ in \eqref{eq:solvep3},
$0=\cD_0 q_1\otimes H_p(p_0,q_0):\nabla^2g(q_0)+G(q_0)\cdot [\cD_1 p_1\cdot H_{pp}(p_0, q_0)]=G(q_0)\cdot[\cD_1 p_1\cdot H_{pp}(p_0, q_0)].$
Since the $m$ constraints are independent and $H_{pp}$ is invertible, $GH_{pp}G^T$ is invertible and thus
$\cD_0(\lambda_1+\lambda_2)=0.$
Hence,
$\cD_1p_1=0.$

Now, we move to $s=2$. By the $q_1$ equation in \eqref{eq:solvep2}, 
\begin{equation*}
\begin{split}
\cD_2q_1 &=\lim_{h\to 0^+} h^{-1}[H_p(P, q_0)-H_p(P, q_1)]
+\cD_1P\cdot H_{pp}(p_0, q_0)+\frac{1}{2}\cD_1q_1\cdot H_{qp}(p_0, q_0)\\
&=- H_p(p_0, q_0)\cdot H_{qp}(p_0, q_0)
-[H_q(p_0, q_0)+G^T(q_0)\cdot\cD_0\lambda_1]\cdot H_{pp}(p_0, q_0)
\end{split}
\end{equation*}
By the constraint in \eqref{eq:solvep2}, one has
$G(q_0)\cD_2q_1=0$. Using this, one can determine $\cD_0\lambda_1$.

Similarly, we have
\begin{equation*}
\begin{split}
\cD_2p_1&=\lim h^{-1}(H_q(P, q_1)-H_q(P, q_0))
-\cD_1P\cdot H_{pq}(p_0, q_0)-\frac{1}{2}\cD_1q_1\cdot H_{qq}-
G^T(q_0)\cD_1[\lambda_1+\lambda_2]-\\
&\cD_1q_1\cdot\nabla_2g(q_0)\cdot\lim\lambda_2 \\
&=H_p(p_0, q_0)\cdot H_{qq}(p_0, q_0)
+[H_q(p_0, q_0)+G^T(q_0)\cD_0\lambda_1]\cdot H_{pq}(p_0, q_0)
-G^T(q_0)\cD_1[\lambda_1+\lambda_2].
\end{split}
\end{equation*}
Moreover, By the constraint of $p_1$ in \eqref{eq:solvep3},
we have
\[
0=H_p(p_0, q_0)\otimes \cD_2q_1:\nabla_{qq}g(q_0)
+(\cD_2p_1\cdot H_{pp}(p_0, q_0)
+\cD_2q_1\cdot H_{qp}(p_0, q_0))\cdot\nabla_qg(q_0)
\]
With this, one can determine $\cD_1(\lambda_1+\lambda_2)$.
Hence, we can find that
\begin{gather}
\cD_2\mu=H_p\otimes H_p:H_{qq}-H_q\otimes H_q:H_{pp}
+R,
\end{gather}
where
\[
R:= [G^T\cD_0\lambda_1]\otimes H_p: H_{pq}
-H_p G^T \cD_1[\lambda_1+\lambda_2]
-[G^T\cD_0\lambda_1]\otimes H_q: H_{pp}.
\]
Every term in $R$ involves $G$, while the first two terms 
in $\cD_2\mu$ do not depend on $G$. Hence,
$\cD_2\mu\neq 0$ for all most all $(p, q)$.
It is possible that at some special points, this can be zero.

\begin{remark}
For a more straightforward derivation, one may take derivatives on $\alpha$ in \eqref{eq:solvep2}-\eqref{eq:solvep3} and then expand 
the quantities like $\partial_{\alpha}P, \partial_{\alpha}p_1$ etc in terms of powers of $h$. Then compare the coefficients, one can derive the same formulas.
\end{remark}

To summarize, we expect that for almost all $(p_0, q_0)$,
\[
\lim_{h\to 0^+}\frac{1}{h^2}\frac{\partial \mu(0, h)}{\partial \alpha}\neq 0.
\]
This coincides with the fact the method for $\alpha\neq 0$ is first order.
Hence, the method should be second order by Corollary \ref{cor:order}.
We know the order of convergence for $\alpha=0$ is two or $r=3$.
Hence, mostly, we will have
\[
\alpha^*=O(h).
\]
It could be possible that at some special points, $\lim_{h\to 0^+}\frac{1}{h^2}\frac{\partial \mu(0, h)}{\partial \alpha}=0$.  For these points, there could still be solutions for $\alpha^*$ but one may have larger $\alpha^*\gg h$, as we need to look into higher orders.

\section{Numerical Experiments}
\label{sec:numer}
In this section, we present numerical examples to verify our theoretical findings. We verify that the method constructed in this paper can accurately conserve the quadratic invariants and constraint manifolds of the system, and at the same time, the appropriate parameters $\alpha^*$ can be selected to make the system energy conservation at each step. As far as we know, this is the first time that there is a numerical method for a Hamiltonian constrained  systems that can hold all three quantities at once. We will also verify the order of convergence of the parameterized method and prove that the parameter perturbation will not decrease the order of convergence of the numerical solutions.
All the experiments in this paper are implemented using Julia on Version 1.4.2
and Packages: NLsolve v4.4.0.

 \subsection{Spherical pendulum}

 \begin{figure}[!ht]
\begin{center}
$\begin{array}{cc}
 \includegraphics[scale=0.35]{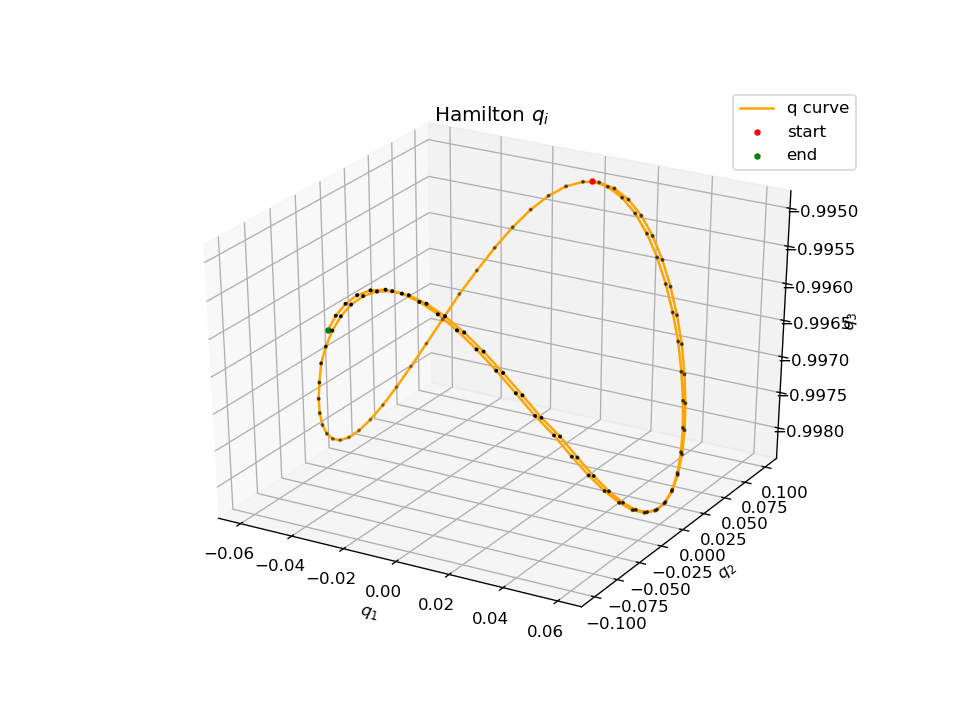}
 \includegraphics[scale=0.35]{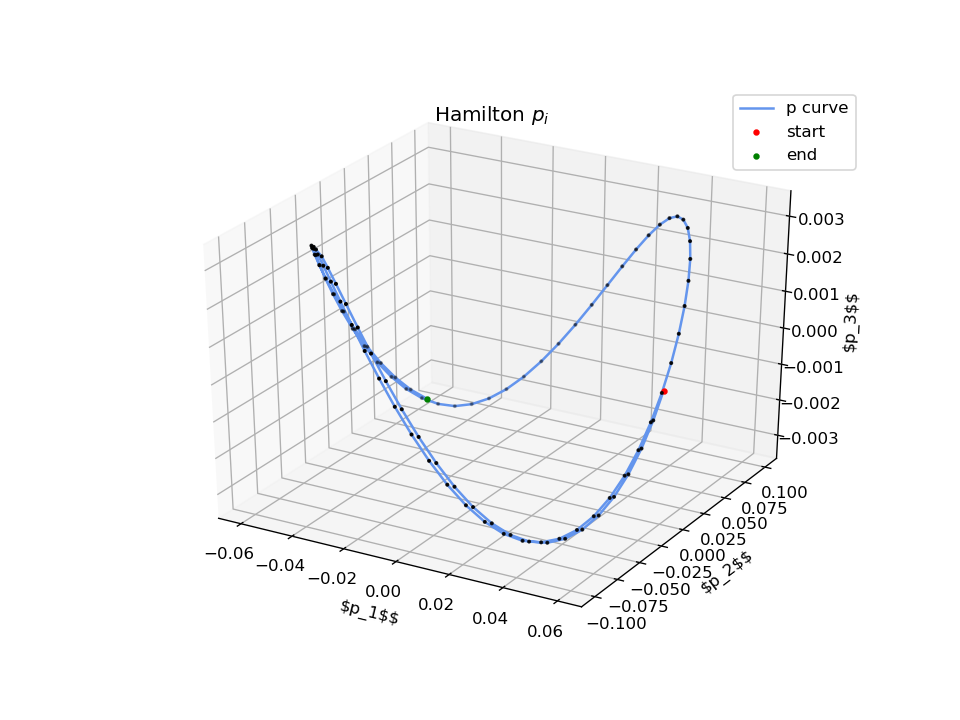}
 \end{array}$
\end{center}
\caption{Numerical solutions.}
\label{Piceigen}
\end{figure}

\begin{figure}[!ht]
\begin{center}
$\begin{array}{cc}
 \includegraphics[scale=0.4]{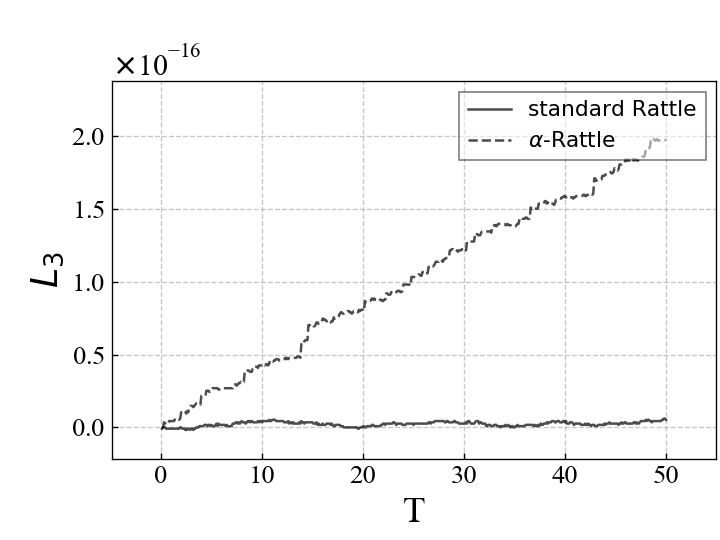}
 \quad\quad
 \includegraphics[scale=0.4]{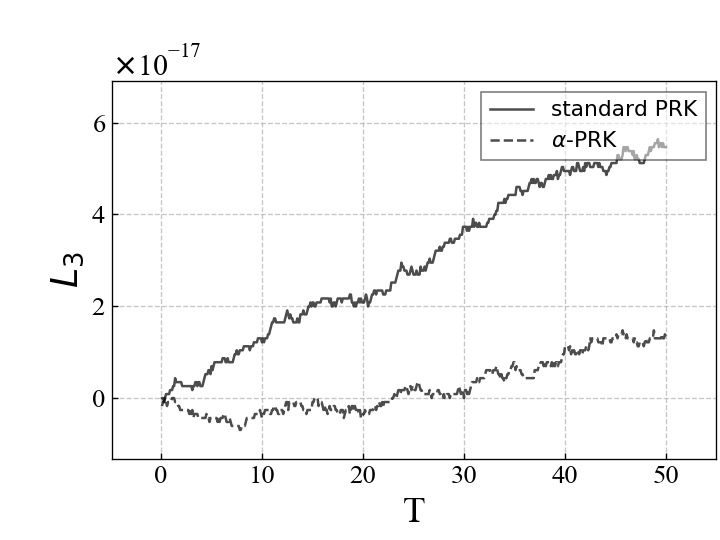}
 \end{array}$
\end{center}
\caption{The errors in the third component of the angular momentum for standard Rattle and Lobatto IIIA-IIB methods and the $\alpha$-Rattle and $\alpha$-PRK III methods.}
\label{ang}
\end{figure}

\begin{figure}[htbp]\centering

\subfigure[Energy errors for standard Rattle method.]{
\begin{minipage}[t]{0.45\linewidth}
\centering
\includegraphics[width=2.5in]{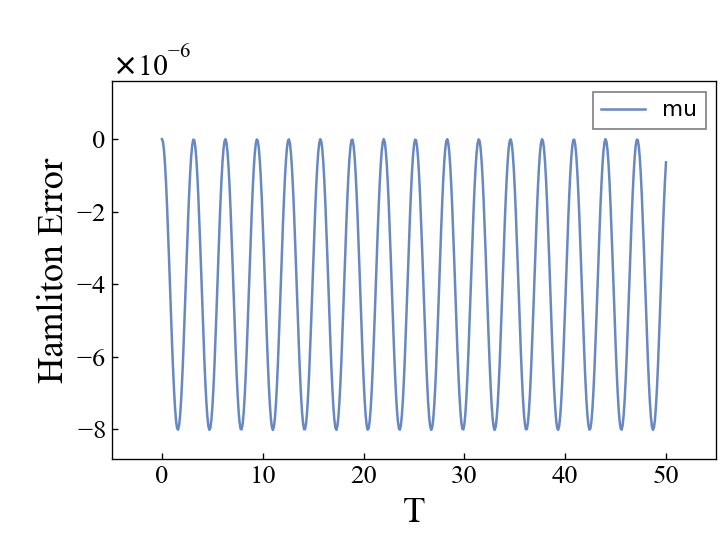}
\end{minipage}
}
\subfigure[Energy errors for standard Lobatto IIIA-IIIB method.]{
\begin{minipage}[t]{0.45\linewidth}
\centering
\includegraphics[width=2.5in]{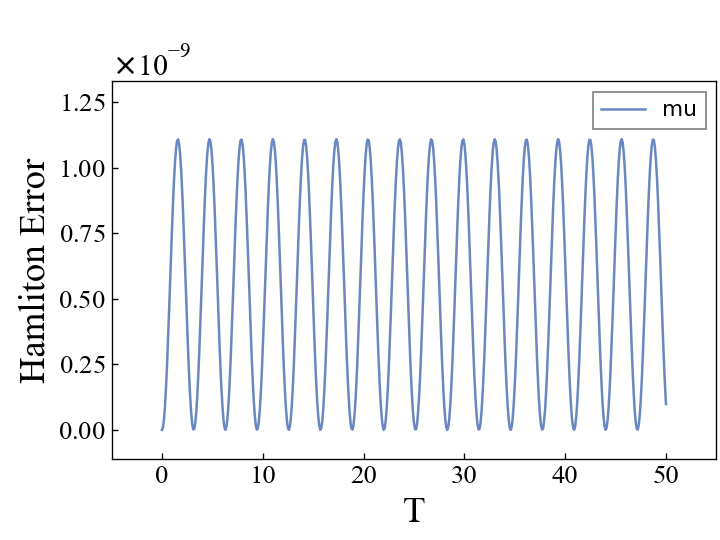}
\end{minipage}
}                 

\subfigure[$\alpha^{*}$ for $\alpha$-Rattle method.]{
\begin{minipage}[t]{0.45\linewidth}
\centering
\includegraphics[width=2.5in]{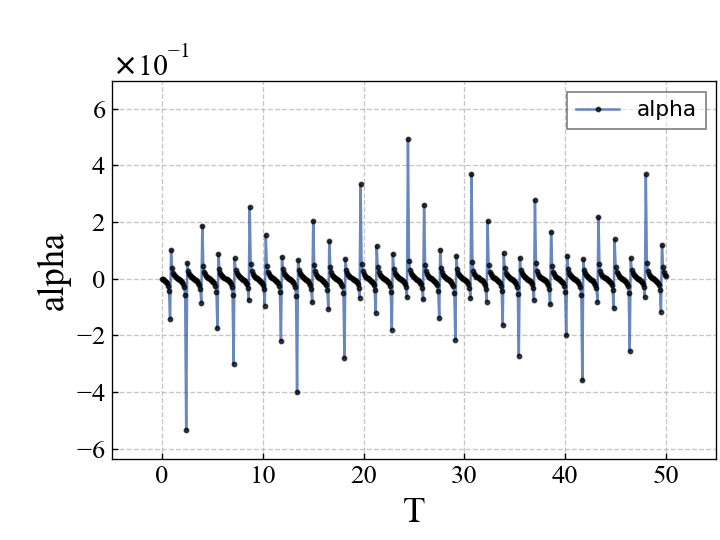}
\end{minipage}
}
\subfigure[Energy errors for $\alpha$-Rattle method.]{
\begin{minipage}[t]{0.45\linewidth}
\centering
\includegraphics[width=2.5in]{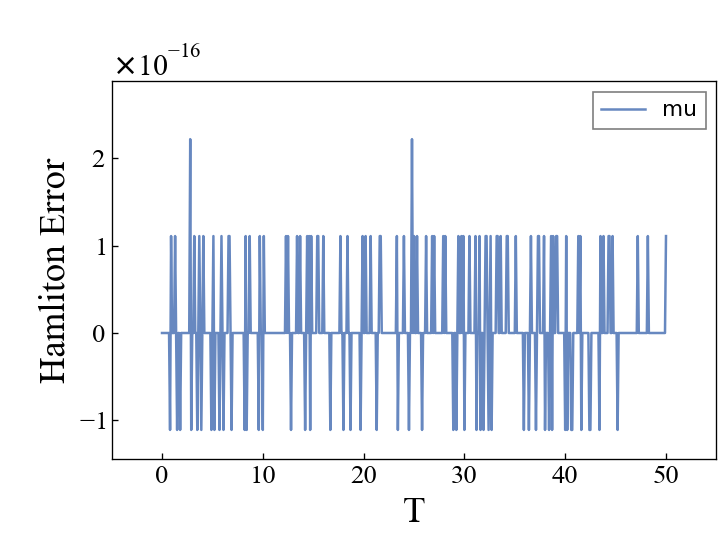}
\end{minipage}
}                 

\subfigure[$\alpha^{*}$ for $\alpha$-PRK III method.]{
\begin{minipage}[t]{0.45\linewidth}
\centering
\includegraphics[width=2.5in]{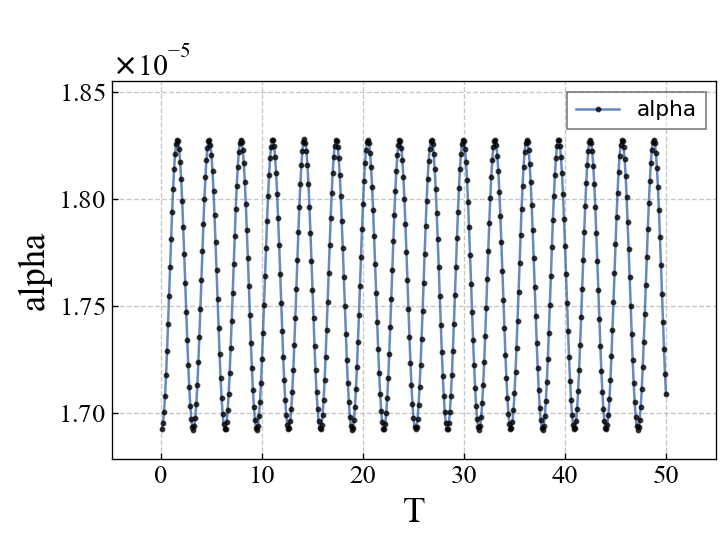}
\end{minipage}
}
\subfigure[Energy errors for $\alpha$-PRK III method.]{
\begin{minipage}[t]{0.45\linewidth}
\centering
\includegraphics[width=2.5in]{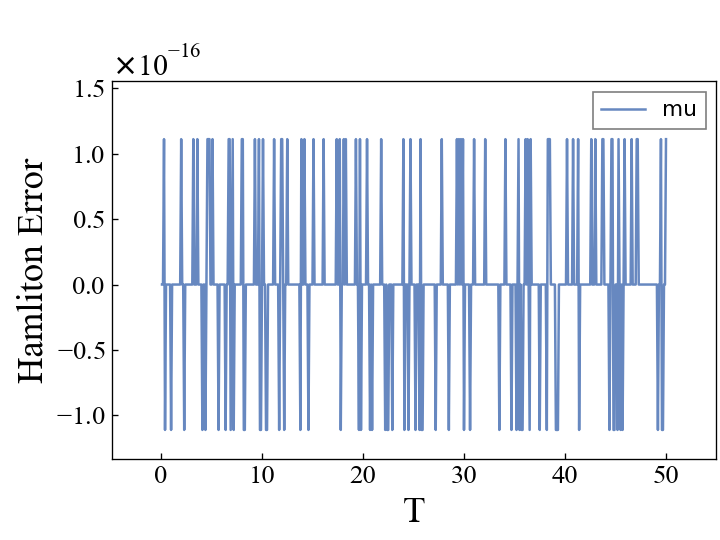}
\end{minipage}
}                 
\caption{ $\alpha^{*}$ and energy errors with $h=0.1$.}
\label{figex1}
\end{figure}

Consider the spherical pendulum equation with $H(p,q)=\frac{1}{2}(p_1^2+p_2^2+p_3^2)+q_3$ and $g(q)=(q_1^2+q_2^2+q_3^2)-1$, so that
\begin{equation} \label{eq26}
\begin{split}
H_p(p,q)=\begin{bmatrix}
p_1   \\
p_2  \\
p_3 \\
\end{bmatrix}, ~
H_q(p,q)=\begin{bmatrix}
0   \\
0  \\
1 \\
\end{bmatrix}, ~
G^{T}(q)=2 \begin{bmatrix}
q_1   \\
q_2  \\
q_3 \\
\end{bmatrix}.
\end{split}
 \end{equation}
For this example, the Lagrangian
\begin{gather*}
L(Q, \dot{Q})=\frac{1}{2}|\dot{Q}|^2-Q_3,
\end{gather*}
 is invariant under rotations about $z$ axis. In particular, taking $D_1$ to be a $2\times 2$ anti-symmetric matrix and setting
\begin{gather}
D:=\left[\begin{array}{cc}
D_1 & 0\\
0 &0
\end{array}\right].
\end{gather}
Then, using $D^T=-D$, and $(e^{sD}v)_3=v_3$, it can be verified easily that
\begin{gather*}
g(e^{sD}q)=q^T e^{sD^T}e^{sD}q-1=q^Tq-1=0,~~~
L(e^{sD}\dot{q}, e^{sD}q)=\frac{1}{2}|e^{sD}\dot{q}|^2
-(e^{sD}q)_3=L(\dot{q}, q).
\end{gather*}
Hence, the quantity $I(p, q)=q^T Dp$ is invariant under this Hamiltonian dynamics. In particular, this implies that the third component of the angular momentum $ L(p,q)=q\times p$ is conservative. Physically, the forces (force due to the constraint which points along the radius of the sphere, the gravity), $q$ and $z$-axis are coplanar, so $z$-component of the torque is zero. Thus, the $z$-component of the angular moment is conserved.  We will verify through numerical experiments in this example that the parameterized method we constructed can preserve both the Hamilton function and $L_3$.

\begin{table}[ht] 
\caption{The errors $e_{p}$, $e_{q}$ and
the orders at $T=0.5$ for the $\alpha$-Rattle method for Example 1}
\begin{center}\label{tab1}
\begin{tabular}{ccccc}
\hline $h $ & $e_{p}$ & \mbox{order} & $e_{q}$ & \mbox{order}\\
\hline 
0.25 & 3.3643e-4         &- & 3.5220e-4 & -\\
0.125 & 8.6813e-5       &1.9543 &8.9671e-5 & 1.9737\\
0.0625& 2.1895e-5       &1.9872 &2.2535e-5 & 1.9924\\
0.03125& 5.4863e-6    &1.9967 &5.6416e-6 & 1.9980\\
0.015625& 1.7323e-6    &1.9992 &1.4108e-6 & 1.9995\\
\hline
\end{tabular}
\end{center}

\caption{The errors $e_{p}$, $e_{q}$ and
the orders at $T=1$ for the $\alpha$-Rattle method for Example 1}
\begin{center}\label{tab2}
\begin{tabular}{ccccc}
\hline $h $ & $e_{p}$ & \mbox{order} & $e_{q}$ & \mbox{order}\\
0.1 & 1.1216e-4         &- &1.0774e-4 & -\\
0.05 & 4.5419e-5       &1.3042 &4.8323e-5 & 1.1569\\
0.025& 1.5315e-5       &1.5683 &2.0176e-5 & 1.2600\\
0.0125& 1.6383e-6       &3.2247 &1.5628e-6 & 3.6905\\
0.00625& 5.7032e-7    &1.5226 &5.9045e-7 & 1.4043\\
\hline
\end{tabular}
\end{center}

\caption{The errors $e_{p}$, $e_{q}$ and
the orders at $T=1$ for the $\alpha$-PRK III method for Example 1}
\begin{center}\label{tab3}
\begin{tabular}{ccccc}
\hline $h $ & $e_{p}$ & \mbox{order} & $e_{q}$ & \mbox{order}\\
\hline 
0.25 & 4.0025e-7         &- & 4.7611e-7 & -\\
0.125 & 2.5089e-8       &3.9957 &2.9843e-8 & 3.9958\\
0.0625& 1.5692e-9       &3.9989 &1.8665e-9 & 3.9989\\
0.03125& 9.8091e-11    &3.9998 &1.1666e-10 & 3.9998\\
0.015625& 5.9634e-12    &4.0399 &6.9571e-12 & 4.0677\\
\hline
\end{tabular}
\end{center}
\end{table}

We take the initial value $q_0 =(0, \sin(0.1), -\cos(0.1)), p_0 =(0.06, 0, 0)$ for the simulations.  In Fig. \ref{ang},  we present the third component of the angular momentum $L(p,q)$. From which we can see that both the standard Rattle and Lobatto IIIA-IIIB methods and the parameterized methods constructed in this paper can preserve this quadratic invariants to machine precision, that is up to $10^{-16}$.

In Fig. \ref{figex1}, we show the results for the chosen parameters $\alpha$ and computed Hamiltonians for the $\alpha$-Rattle in \eqref{eq:solvep2}-\eqref{eq:solvep3} and $\alpha$-PRK III method in \eqref{eq:2alpha3IIIA-IIIA}. Clearly, the standard symplectic methods (i.e. Rattle and Lobatto IIIA-IIIB methods) have energy error range from $10^{-9}$ to $10^{-6}$, while the parameterized symplectic methods can make the energy conservation in the sense of that the energy error reach the machine accuracy.  At each step, we can find parameters $\alpha^{*}=\alpha(h, y_{n})$ that achieve energy conservation. We can see from $(c)$ in Fig. \ref{figex1} that the parameter is small in most cases for $\alpha$-Rattle method, but it may take a slightly larger value at a very few points, and such an exceptional points may lower the order of the parameterized methods. 

In the Table \ref{tab1} and \ref{tab2} we give the specific order of convergence for the $\alpha$-Rattle method. We can see that the order of Table 1 is completely consistent with the standard algorithm of second order when computed on $[0,0.5]$, while the order of Table 2 is slightly reduced when we computed on $[0, 1]$. We found that there is some $\alpha$ that is big as shown in Figure \ref{figex1} (c) between $t=0.5$ and $t=1$, and this seems to reduce the computational accuracy as commented above. We are not clear with the deeper reason for this. Maybe, the conditions in Theorem \ref{thm:existence} might be violated at some point, or maybe the nonlinear system is not solved with high accuracy as commented below in Remark \ref{rmk:nonlinearsyssolve}.  
For the 3 stage $\alpha$-PRK method, we can see from $(e)$ in Figure \ref{figex1}  that the parameter is uniform small about $O(10^{-5})$. As shown in Table \ref{tab3}, the convergence order of $\alpha$-PRK III method is preserved exactly as the standard 3 stage PRK method. That is forth order. 

\begin{remark}\label{rmk:nonlinearsyssolve}
In the specific implementation of the parameterized methods presented in this article, it is very important to accurately solve the corresponding nonlinear system of equations (like \eqref{eq:coeffPQ}-\eqref{eq:solvep2}). If the accuracy of the numerical solutions is not enough (for example, the constraints are not satisfied to enough accuracy), the accuracy of the energy computation might be reduced to like from $10^{-8}$ to $10^{-12}$, and the searched parameters $\alpha^*$ might not be very good.  One may try to improve the accuracy of nonlinear equations by giving a good initial value during the iteration. For example, use the numerical results of the standard PRK methods as the initial value in the iteration of the nonlinear equations for the corresponding parameterized methods. For future work, one may introduce some correction field or augmented variables to make the nonlinear system more stable to solve.
\end{remark}

\subsection{Satellites system}

\begin{figure}[!ht]
\begin{center}
$\begin{array}{cc}
 \includegraphics[scale=0.45]{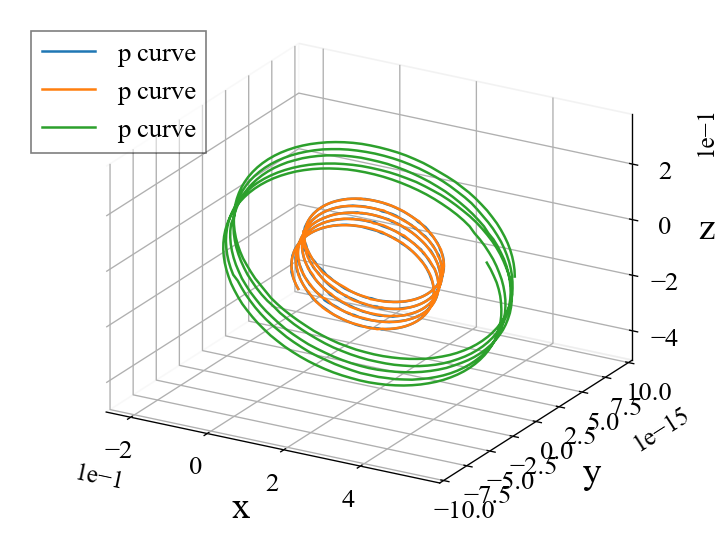}
 \includegraphics[scale=0.45]{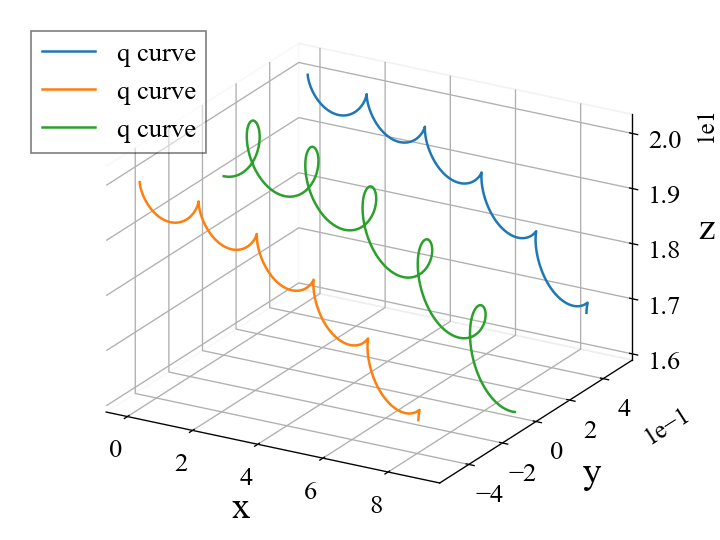}
 \end{array}$
\end{center}
\caption{Numerical solutions obtained by $\alpha$-Rattle method with $h=0.1$.}
\label{Piceigen}
\end{figure}

We discuss a closed-loop rotating triangular tethered satellites system, including three satellites (considered as mass-points) of masses $m_i, i = 1, 2, 3,$ joined by inextensible, tight, and massless tethers, of lengths $L_i, i = 1, 2, 3$, respectively. For sake of simplicity, we assume unit masses and lengths, and normalize the gravity constant. Consequently, if $q_i :=(x_i,y_i,z_i)^T\in\mathbb{R}^3, i =1,2,3$, are the positions of the three satellites, the constraints $0=g(q)$ are given by
 \begin{equation} \label{eq:constrants2}
\begin{split}
\begin{cases}
&g_1(q)=(q_1-q_2)^T (q_1-q_2)-1=0, \\
&g_2(q)=(q_2-q_3)^T (q_2-q_3)-1=0,\\
&g_3(q)=(q_3-q_1)^T (q_3-q_1)-1=0.\\
\end{cases}
\end{split}
 \end{equation}  The Hamiltonian is specified by 
 $H(p,q)=\sum_{i=1}^3\left(\frac{1}{2}p_i^{T}p_i- \frac{1}{\sqrt{q_i^{T}q_i}}\right).$

 Hence, we have that 
 \begin{equation} \label{eq:Hfunction}
\begin{split}
&H_p=p=\begin{bmatrix}
p_1   \\
p_2  \\
p_3 \\
\end{bmatrix}, ~
H_q=f(q):=
\begin{bmatrix}
 (q_1^{T}q_1)^{-\frac{3}{2}} q_1  \\
 (q_2^{T}q_2)^{-\frac{3}{2}} q_2  \\
 (q_3^{T}q_3)^{-\frac{3}{2}} q_3 \\
\end{bmatrix},~
G^{T}(q)=
 2 \begin{bmatrix}
q_1-q_2 & 0  & q_1-q_3 \\
q_2-q_1 & q_2-q_3 & 0\\
0 & q_3-q_2  & q_3-q_1\\
\end{bmatrix}.
\end{split}
 \end{equation}

\begin{figure}[htbp]\label{ex1}
\centering

\subfigure[Energy errors for Rattle method with $h=0.1$.]{
\begin{minipage}[t]{0.45\linewidth}
\centering
\includegraphics[width=2.5in]{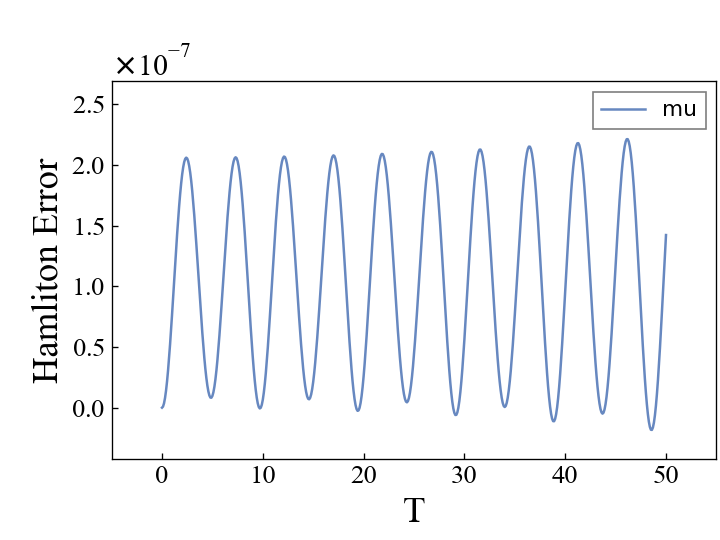}
\end{minipage}
}
\subfigure[Energy errors for Lobatto IIIA-IIIB method with $h=0.2$.]{
\begin{minipage}[t]{0.45\linewidth}
\centering
\includegraphics[width=2.5in]{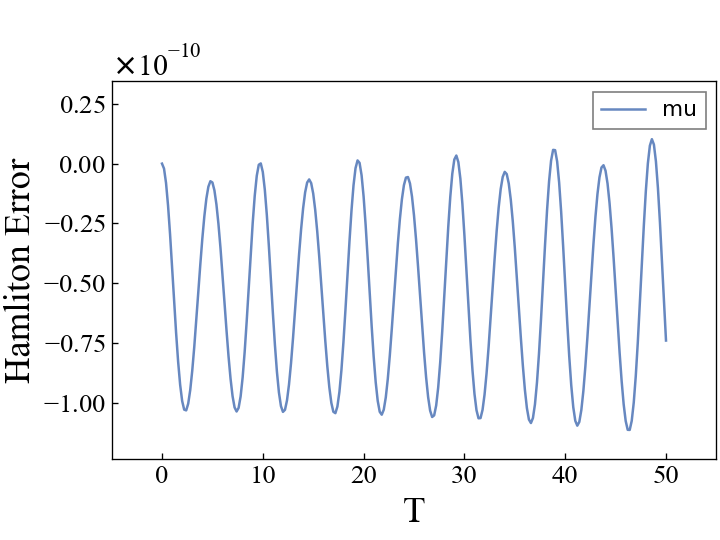}
\end{minipage}
}                

\subfigure[$\alpha^{*}$ for $\alpha$-Rattle method with $h=0.1$.]{
\begin{minipage}[t]{0.45\linewidth}
\centering
\includegraphics[width=2.5in]{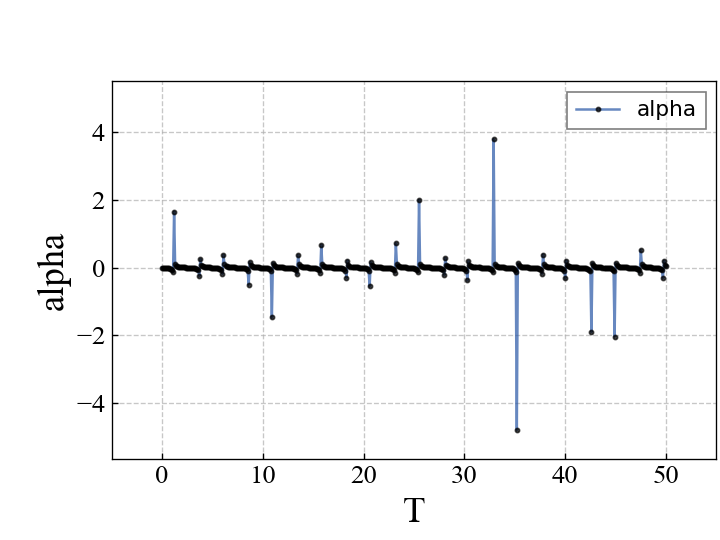}
\end{minipage}
}
\subfigure[Energy errors for $\alpha$-Rattle method with $h=0.1$.]{
\begin{minipage}[t]{0.45\linewidth}
\centering
\includegraphics[width=2.5in]{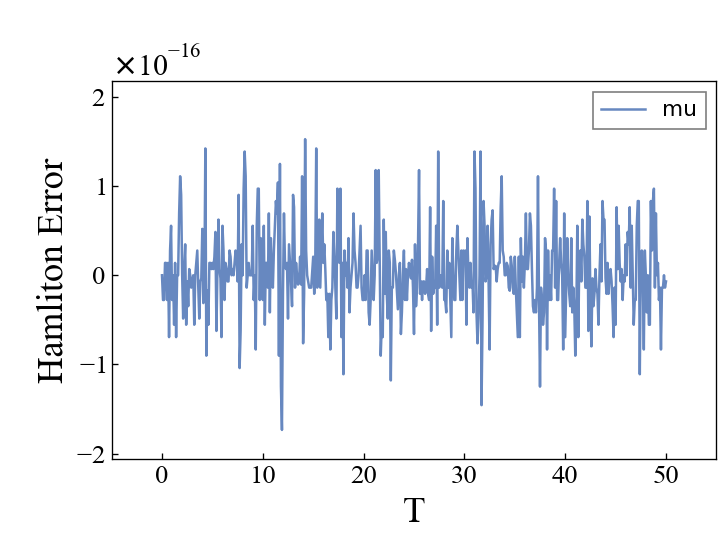}
\end{minipage}
}%

\subfigure[$\alpha^{*}$ for $\alpha$-PRK III method  with $h=0.2$.]{
\begin{minipage}[t]{0.45\linewidth}
\centering
\includegraphics[width=2.5in]{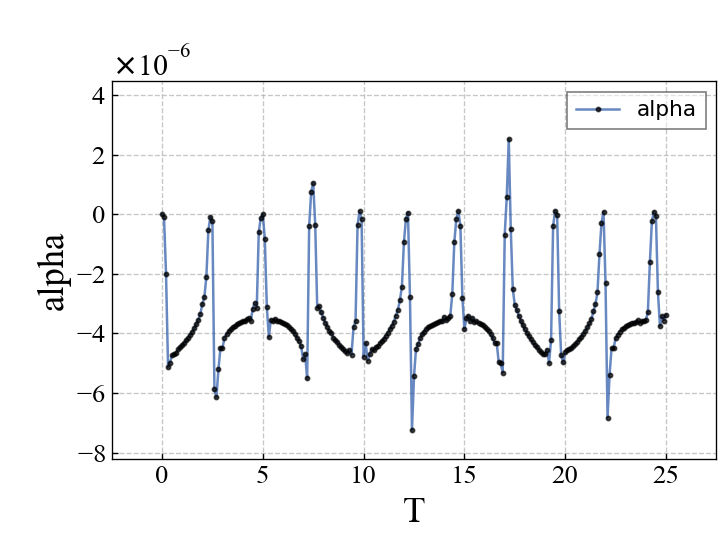}
\end{minipage}
}
\subfigure[Energy errors for $\alpha$-PRK III method  with $h=0.2$.]{
\begin{minipage}[t]{0.45\linewidth}
\centering
\includegraphics[width=2.5in]{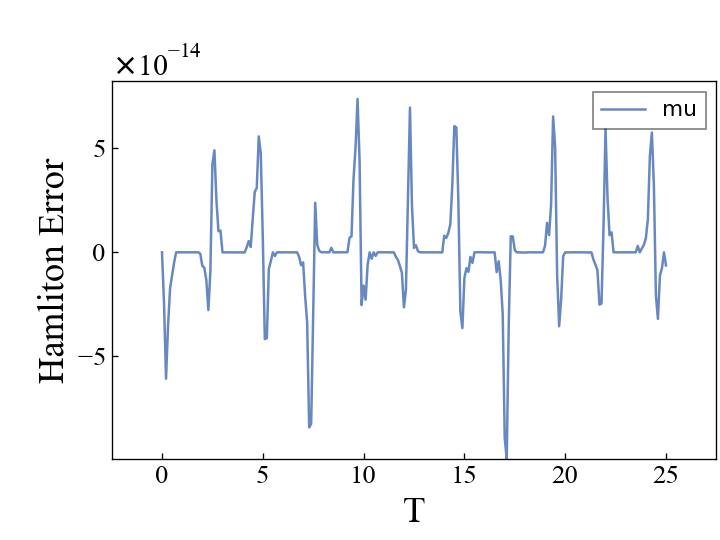}
\end{minipage}
}%
\caption{ $\alpha^{*}$ and energy errors}
\end{figure}

For the simulation, we choose the consistent initial values given by 
 \begin{equation*} 
\begin{split}
q_1(0)=\begin{bmatrix}
0   \\
\frac{1}{2}  \\
z_0 \\
\end{bmatrix}, ~
q_2(0)=\begin{bmatrix}
0   \\
-\frac{1}{2}  \\
z_0 \\
\end{bmatrix}, ~
q_3(0)=\begin{bmatrix}
0   \\
0  \\
z_0-\frac{\sqrt{3}}{2} \\
\end{bmatrix}, ~
p_1(0)=p_2(0)=\begin{bmatrix}
0   \\
0 \\
0 \\
\end{bmatrix}, ~
p_3(0)=\begin{bmatrix}
v_0   \\
0  \\
0 \\
\end{bmatrix}.
\end{split}
 \end{equation*}
 where $z_0 = 20$ and $v_0$ is such that the initial Hamiltonian is zero. 
This provides a configuration in which the first two
satellites remain parallel to each other, moving in the planes $y = 1/2$ and $y = -1/2$, respectively, 
and the third one moves around the tether joining the first two, in the plane $y = 0$.

\begin{table}[ht]
\caption{The errors $e_{p}$, $e_{q}$and
the orders at $T=1$ for the $\alpha$-Rattle method for Example 2}
\begin{center}\label{tab4}
\begin{tabular}{ccccc}
\hline $h $ & $e_{p}$ & \mbox{order} & $e_{q}$ & \mbox{order}\\
\hline 
0.25 & 1.2290e-3         &-           & 1.9300e-3 & -\\
0.125 & 3.0460e-4       &2.0125 &4.7835e-4 & 2.1247\\
0.0625& 7.59745e-5    &2.0033 &1.1931e-4 & 2.0033\\
0.03125& 1.8968e-5    &2.0018 &2.9789e-5 & 2.0018\\
0.015625& 4.7270e-6  &2.0046 &7.4235e-6 & 2.0046\\
\hline
\end{tabular}
\end{center}

\caption{The errors $e_{p}$, $e_{q}$and
the orders at $T=1$ for the $\alpha$-PRK III method for Example 2}
\begin{center}\label{tab5}
\begin{tabular}{ccccc}
\hline $h $ & $e_{p}$ & \mbox{order} & $e_{q}$ & \mbox{order}\\
\hline 
0.25 & 1.2299e-6         &-           & 1.9307e-6 & -\\
0.125 & 7.1884e-8       &4.0967 &1.1284e-7 & 4.0967\\
0.0625& 3.0717e-9      &4.5485 &4.8219e-9 & 4.5485\\
0.03125& 9.7230e-11   &4.9815 &1.5263e-10 & 4.9815\\
0.015625& 6.4283e-12 &3.9188 &1.0091e-11 & 3.9188\\
\hline
\end{tabular}
\end{center}
\end{table}

For this more complex example, our numerical results are similar to the first example. The standard symplectic PRK methods have energy errors, but the parameterized methods can preserves energy to machine accuracy. At each step, we can find the parameter values that make the energy conserved. Meanwhile, as shown in Table \ref{tab4} and \ref{tab5}, the order of convergence of the two parameterized methods can be completely consistent with the standard methods. By comparison, it is found that $\alpha$-PRK III methods are often superior to the $\alpha$-Rattle method, and the parameter values of $\alpha$-PRK III methods are usually uniformly small, while $\alpha$-Rattle method may have a few points with large $\alpha^*$ values.

\section*{Acknowledgements} 
The authors would like to thank Mr. Changze Chen (Northwest University) for his great help and support in the implementation of the numerical experiments in this paper. The work of L. Li was partially sponsored by NSFC 11901389, Shanghai Sailing Program 19YF1421300 and NSFC 11971314. 
The work of D. Wang was partially sponsored by NSFC 11871057, 11931013 and Project for Young Science and Technology Star of Shaanxi Province in China (2018 KJXX-070).

\bibliographystyle{alpha}
\bibliography{Hamilmanifold}

\begin{thebibliography}{WDQW}

\bibitem[And83]{andersen1983rattle}
H.~C. Andersen.
\newblock Rattle: A "velocity" version of the shake algorithm for molecular
  dynamics calculations.
\newblock {\em J. Comput. Phys.}, 52(1):24--34, 1983.

\bibitem[Arn13]{arnold13}
Vladimir~Igorevich Arnol'd.
\newblock {\em Mathematical methods of classical mechanics}, volume~60.
\newblock Springer Science \& Business Media, 2013.

\bibitem[BG]{BenettinOn}
Giancarlo Benettin and Antonio Giorgilli.
\newblock On the {H}amiltonian interpolation of near-to-the identity symplectic
  mappings with application to symplectic integration algorithms.
\newblock {\em Journal of Statistical Physics}, 74(5-6):1117--1143.

\bibitem[BGI]{BrugnanoAnalysis}
Luigi Brugnano, Gianmarco Gurioli, and Felice Iavernaro.
\newblock Analysis of energy and quadratic invariant preserving (equip)
  methods.
\newblock {\em Journal of Computational $\&$ Applied Mathematics}, 335:51--73.

\bibitem[BGIW]{BrugnanoLine}
Luigi Brugnano, Gianmarco Gurioli, Felice Iavernaro, and Ewa~B. Weinm\"{u}ller.
\newblock Line integral solution of {H}amiltonian systems with holonomic
  constraints.
\newblock {\em Applied Numerical Mathematics}, 127(MAY):56--77.

\bibitem[BIT]{BrugnanoEnergy}
Luigi Brugnano, Felice Iavernaro, and Donato Trigiante.
\newblock Energy- and quadratic invariants--preserving integrators based upon
  gauss collocation formulae.
\newblock {\em S{I}{A}{M} Journal on Numerical Analysis}, 50(6):2897--2916.

\bibitem[BIT09]{Brugnano2009Hamiltonian}
Luigi Brugnano, Felice Iavernaro, and Donato Trigiante.
\newblock Hamiltonian boundary value methods (energy conserving discrete line
  integral methods).
\newblock {\em Mathematics}, (1-2):17--37, 2009.

\bibitem[CFM]{ChartierAn}
Philippe Chartier, Erwan Faou, and Ander Murua.
\newblock An algebraic approach to invariant preserving integators: The case of
  quadratic and {H}amiltonian invariants.
\newblock {\em Numerische Mathematik}, 103(4):575--590.

\bibitem[CHJ20]{ChenModified}
Chuchu Chen, Jialin Hong, and Diancong Jin.
\newblock Modified averaged vector field methods preserving multiple invariants
  for conservative stochastic differential equations.
\newblock {\em B{I}{T} Numer Math. https://doi.org/10.1007/s10543-020-00803-6},
  2020.

\bibitem[Fen86]{Feng1986Difference}
Kang Feng.
\newblock Difference schemes for {H}amiltonian formalism and symplectic
  geometry.
\newblock {\em Journal of Computational Mathematics}, 4(3):279--289, 1986.

\bibitem[FQ03]{Feng2003}
Kang Feng and Mengzhao Qin.
\newblock {\em Symplectic Geometric Algorithms for Hamiltonian Systems}.
\newblock Zhejiang Science and Technology Press, Hangzhou, 2003.

\bibitem[GM]{GeLie}
Zhong Ge and Jerrold~E. Marsden.
\newblock Lie-{P}oisson {H}amilton-{J}acobi theory and {L}ie-{P}oisson
  integrators.
\newblock 133(3):134--139.

\bibitem[Gon96]{Gonzalez1996Time}
O.~Gonzalez.
\newblock Time integration and discrete hamiltonian systems.
\newblock {\em Journal of Nonlinear Science}, 6(5):449--467, 1996.

\bibitem[HLW06]{hairer2006geometric}
Ernst Hairer, Christian Lubich, and Gerhard Wanner.
\newblock {\em Geometric numerical integration: structure-preserving algorithms
  for ordinary differential equations}, volume~31.
\newblock Springer Science $\&$ Business Media, 2006.

\bibitem[HW06]{Hairer2006}
Ernst Hairer and Gerhard Wanner.
\newblock {\em Solving Ordinary Differential Equations II stiff and
  differential-algebraic problems}.
\newblock Sciencep, Beijing, 2006.

\bibitem[Jay96]{jay1996symplectic}
Laurent Jay.
\newblock Symplectic partitioned {R}unge--{K}utta methods for constrained
  {H}amiltonian systems.
\newblock {\em SIAM Journal on Numerical Analysis}, 33(1):368--387, 1996.

\bibitem[Jay02]{jay2002iterative}
L.~Jay.
\newblock Iterative solution of {SPARK} methods applied to {DAE}s.
\newblock {\em Numerical Algorithms}, 31(1-4):171--191, 2002.

\bibitem[JWZ19]{Zhang19}
An~Jing, Cao Waixiang, and Zhang Zhimin.
\newblock A class of efficient spectral methods and error analysis for
  nonlinear {H}amiltonian systems.
\newblock {\em Commun. Math. Sci.}, 18(2):395--428, 2019.

\bibitem[MQR]{MclachlanGeometric}
R.~I. Mclachlan, G.~R.~W. Quispel, and N.~Robidoux.
\newblock Geometric integration using discrete gradients.
\newblock {\em Philosophical Transactions of the Royal Society B Biological
  Sciences}, 357(1754):1021--1045.

\bibitem[MW01]{marsden2001discrete}
Jerrold~E Marsden and Matthew West.
\newblock Discrete mechanics and variational integrators.
\newblock {\em Acta Numerica}, 10:357--514, 2001.

\bibitem[Noe71]{noether71}
Emmy Noether.
\newblock Invariant variation problems.
\newblock {\em Transport Theory and Statistical Physics}, 1(3):186--207, 1971.

\bibitem[QM]{QuispelA}
G~R~W Quispel and D~I McLaren.
\newblock A new class of energy-preserving numerical integration methods.
\newblock {\em Journal of Physics A Mathematical $\&$ Theoretical},
  41(4):045206.

\bibitem[QW11]{Qin2011}
Mengzhao Qin and Yushun Wang.
\newblock {\em Structure-preserving Algorithms for Partial Differential
  Equations}.
\newblock Zhejiang Science and Technology Press, Hangzhou, 2011.

\bibitem[Sha]{ShangKAM}
Zaijiu Shang.
\newblock K{A}{M} theorem of symplectic algorithms for {H}amiltonian systems.
\newblock {\em Numerische Mathematik}, 83(3):477--496.

\bibitem[Sun93]{Sun1993}
Geng Sun.
\newblock Symplectic partitioned {R}unge-{K}utta methods.
\newblock {\em Journal of Computational Mathematics}, 11(4):365--372, 1993.

\bibitem[Sun95]{Sun1995}
Geng Sun.
\newblock Construction of high order symplectic {P}{R}{K} methods.
\newblock {\em Journal of Computational Mathematics}, 13(1):40--50, 1995.

\bibitem[Tan]{TangFormal}
Yifa Tang.
\newblock Formal energy of a symplectic scheme for hamiltonian systems and its
  applications (i).
\newblock {\em Computers $\&$ Mathematics with Applications}, 27(7):31--39.

\bibitem[WDQW]{WeiProjected}
Yi~Wei, Zichen Deng, L.~I. Qingjun, and Bo~Wang.
\newblock Projected {R}unge-{K}utta methods for constrained {H}amiltonian
  systems.
\newblock {\em Applied Mathematics and Mechanics}, 037(8):1077--1094.

\bibitem[WOL17]{Wenger2017Construction}
T~Wenger, S~Oberbl\"{o}baum, and S~Leyendecker.
\newblock Construction and analysis of higher order variational integrators for
  dynamical systems with holonomic constraints.
\newblock {\em Advances in Computational Mathematics}, 43(5):1--33, 2017.

\bibitem[WXL]{WangParametric}
Dongling Wang, Aiguo Xiao, and Xueyang Li.
\newblock Parametric symplectic partitioned {R}unge-{K}utta methods with
  energy-preserving properties for {H}amiltonian systems.
\newblock {\em Computer Physics Communications}, 184(2):303--310.

\end{thebibliography}

\end{document}